
\documentclass[11pt,reqno]{amsart} 
\usepackage[margin=1in]{geometry} 
\geometry{letterpaper}

\setlength{\headheight}{12pt}

\usepackage{amsmath,amsthm,amsfonts,amssymb,amscd,amsrefs}
\usepackage{enumitem}

\usepackage{times}
\usepackage{esint,stackrel}
\usepackage{enumitem}

\usepackage{xfrac}

\usepackage{cases}

\usepackage{fancyhdr} 
\pagestyle{fancy} 
\lhead{}
\chead{}
\rhead{}
\lfoot{}\cfoot{\thepage}\rfoot{}
\setlength{\footskip}{13.0pt}
\setlength{\headheight}{13.0pt}

\def\p{\partial} 
\def\eps{\varepsilon}

\newcommand*{\sgn}{\ensuremath{\mathrm{sgn\,}}}
\newcommand{\RR}{\mathbb R}

\newcommand{\TT}{\mathbb T}
\newcommand{\OO}{\mathcal O}

\renewcommand*{\bar}{\overline}

\renewcommand{\epsilon}{\varepsilon}



\newtheorem{theorem}{Theorem}[section]
\newtheorem{lemma}[theorem]{Lemma}
\newtheorem{proposition}[theorem]{Proposition}


\numberwithin{equation}{section}


\newcommand{\be}{\begin{equation}}
\newcommand{\ee}{\end{equation}}

\newcommand{\R}{\mathbb{R}}

\def\cir{ \! \circ \! }

\newcommand{\loc}{\text{loc}}

\newcommand{\W}{\mathring{W}}
\newcommand{\Z}{\mathring{Z}}
\newcommand{\C}{\mathring{\Sigma}}
\newcommand{\K}{\mathring{K}}

\usepackage[colorlinks=true, pdfstartview=FitV, linkcolor=blue,citecolor=blue, urlcolor=blue]{hyperref}



 
\title{A New Type of Stable Shock Formation in Gas Dynamics}

\author{Isaac Neal}
\address{Courant Institute of Mathematical Sciences, New York University, New York, NY 10012.}
\email{\href{in577@cims.nyu.edu}{in577@cims.nyu.edu}}

\author{Calum Rickard}
\address{Department of Mathematics, University of California Davis, Davis, CA 95616.}
\email{\href{crickard@math.ucdavis.edu}{crickard@math.ucdavis.edu}}

\author{Steve  Shkoller}
\address{Department of Mathematics, University of California Davis, Davis, CA 95616.}
\email{\href{shkoller@math.ucdavis.edu}{shkoller@math.ucdavis.edu}}

\author{Vlad Vicol}
\address{Courant Institute of Mathematical Sciences, New York University, New York, NY 10012.}
\email{\href{vicol@cims.nyu.edu}{vicol@cims.nyu.edu}}

\begin{document}

\begin{abstract}
From an open set of initial data,  we construct a family of classical solutions to the 1D nonisentropic compressible Euler equations which form $C^{0,\nu}$ cusps as a first singularity, for any $\nu \in [\frac{1}{2}, 1)$. For this range of $ \nu $, this is the first result demonstrating the stable formation of such $C^{0,\nu} $ cusp-type singularities, also known as {\it pre-shocks}. The proof  uses a new formulation of the differentiated Euler equations along the fast acoustic characteristic, and relies  on  a novel set of  $L^p$ energy estimates for all $1<p<\infty$, which may be of independent interest.
\end{abstract}

\maketitle

\allowdisplaybreaks
 
\section{Introduction}

A core line of inquiry in the study of nonlinear partial differential equations is the analysis of the finite-time breakdown of classical solutions. For systems of conservation laws like the compressible Euler equations, the prototypical form of finite-time breakdown is a {\it{shock}}, where the smooth solution stays continuous but steepens to a cusp at a point in spacetime known as the {\it{pre-shock}}, after which time it can no longer exist as a smooth solution. In recent decades it has become apparent that having access to the precise behavior of the solution at the time of the first singularity (the {\it{shock profile}}), is key to determining how the resulting shock waves and weak singularities develop and propagate. Here we study the shock formation problem for the Euler equations of one-dimensional ideal gases, and we provide a detailed description of a family of {\em stable} shocks which  resemble $-\sgn(x)|x|^\nu$ near the pre-shock for $\nu \in [\frac{1}{2}, 1)$. 

Here and throughout the paper, by the stability of the shock formation process we mean that for arbitrary sufficiently small and smooth perturbations (with respect to a suitable topology) of the initial data,  the same type of pre-shock forms, possibly at a different location in space and time (which may be computed exactly),  and with the same $C^{0,\nu}$ shock profile  (up to dilations).

\subsection{The Euler equations}

The compressible Euler equations in one space dimension are given by
\begin{subequations} 
\label{euler-weak}
\begin{align}
\partial_t (\rho u)  + \p_y (  \rho u^2 + p  ) & =0\,,  \label{ee2}\\
\partial_t \rho + \p_y (  \rho u ) &=0\,, \label{ee1}\\
\partial_t E + \p_y ( (p+ E) u ) &=0 \label{ee3}\, ,
\end{align}
\end{subequations}
where the unknowns $u, \rho, E,$ and $p$ are scalar functions defined on $\RR \times \RR$ or $\TT \times \RR$: $u$ is the fluid velocity, $\rho$ is the (everywhere positive) fluid density, $E$ is the specific total energy, and $p$ is the pressure. To close the system, one must introduce an equation of state that relates the internal energy $E-\tfrac{1}{2}\rho u^2$ to $p$ and $\rho$. If the fluid is an {\it{ideal gas}}, the equation of state is
\begin{equation}
\label{eq:ig}
p  = (\gamma-1)(E-\tfrac{1}{2}\rho u^2),
\end{equation}
where $\gamma >1$ is a fixed constant called the {\it{adiabatic exponent}}. 

For ideal gases, the specific entropy, $S$, satisfies the relation
\begin{equation}
\label{idealgas}
p = \tfrac{1}{\gamma} \rho^\gamma e^S.
\end{equation}
If we introduce the parameter $\alpha : = \tfrac{\gamma-1}{2}$ and define the (renormalized) sound speed\footnote{ The actual sound speed in a compressible fluid is $c =\sqrt{\tfrac{\p p}{\p \rho} } $.}  to be 
\begin{equation}
\sigma = \tfrac{1}{\alpha} \sqrt{\tfrac{\p p}{\p \rho} } = \tfrac{1}{\alpha} e^{S/2} \rho^\alpha,
\end{equation}
then we can rewrite the ideal gas equations \eqref{euler-weak}--\eqref{eq:ig} in terms of $u,\sigma,$ and $S$ as follows:
\begin{subequations}
\label{eq:euler:2}
\begin{align}
\p_tu + u \p_y u+ \alpha \sigma \p_y \sigma & = \tfrac{\alpha}{2\gamma} \sigma^2 \p_y S,   \\
\p_t \sigma +  u \p_y\sigma + \alpha \sigma \p_y  u & = 0,   \\
\p_t S + u \p_y S & = 0.
\end{align}
\end{subequations}

In this paper, we prove the following theorem:

\begin{theorem}[\bf Main Result, abbreviated]
\label{thm:abbreviated}
For any choice of adiabatic exponent $\gamma > 1$,  and parameter $\beta \geq 1$, there is an open set (a ball of radius $\eps \ll 1$ with respect to the $W^{2,\frac{\beta}{\beta-1}}(\mathbb{T})$ topology\footnote{See the statement of Theorem~\ref{thm:main}, which in fact gives a weaker assumption for the dominant Riemann variable.}) of initial data $(u_0, \sigma_0, S_0)$,  for which the unique local-in-time classical solution $(u,\sigma, S)$ of the Cauchy problem for \eqref{eq:euler:2} forms a gradient blowup singularity at a computable time $T_* \approx \frac{2}{1+\alpha}$. Furthermore, at time $T_*$ the solution has the profile
\begin{align*}
u(y,T_*) & = u(y_*,T_*) - \Bigl((1+\tfrac{1}{\beta})^{\frac{\beta}{\beta+1}}+ \OO(\eps) \Bigr) \sgn(y-y_*) |y-y_*|^{\frac{\beta}{\beta+1}} + \bigl( 1+ \OO(\eps) \bigr) (y-y_*) \\
\sigma(y,T_*) & = \sigma(y_*,T_*) - \Bigl((1+\tfrac{1}{\beta})^{\frac{\beta}{\beta+1}}+ \OO(\eps) \Bigr) \sgn(y-y_*) |y-y_*|^{\frac{\beta}{\beta+1}} + \bigl( 1+ \OO(\eps) \bigr) (y-y_*) \\
S(y,T_*) & = S(y_*,T_*) + \p_y S(y_*,T_*)(y-y_*) + \OO\Bigl( |y-y_*|^{\frac{\beta+2}{\beta+1}}\Bigr)
\end{align*}
for $y$ in a neighborhood of diameter $\approx 1$ about a point $y_*$ which is computable from the initial data. 
\end{theorem}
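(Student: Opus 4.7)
The plan is to reformulate \eqref{eq:euler:2} in Riemann-invariant coordinates adapted to the fast acoustic characteristic and to isolate a Burgers-type blowup in a single dominant variable $w$, with the coupling to the slow Riemann variable $z$ and the entropy $S$ controlled through a new family of $L^p$ energy estimates. I would set $w = u + \sigma$, $z = u - \sigma$, and take a convenient function $k$ of $S$, rewriting \eqref{eq:euler:2} as
\[
\partial_t w + (u + \alpha\sigma)\partial_y w = \tfrac{\alpha}{2\gamma}\sigma^2\partial_y S, \quad \partial_t z + (u - \alpha\sigma)\partial_y z = -\tfrac{\alpha}{2\gamma}\sigma^2\partial_y S, \quad \partial_t S + u \partial_y S = 0,
\]
so that $w$, $S$, and $z$ are transported along three distinct characteristic speeds $u + \alpha\sigma > u > u - \alpha\sigma$. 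Differentiating the $w$-equation along the fast characteristic $\eta_+$ yields a Riccati-type ODE $\tfrac{d}{dt}(\partial_y w \circ \eta_+) = -\tfrac{1+\alpha}{2}(\partial_y w \circ \eta_+)^2 + \text{source}$, whose source is a bounded combination of $\partial_y z$ and $\partial_y S$. Because $\eta_-$ and the particle path $\eta_0$ are strictly slower than $\eta_+$, no characteristic compression occurs for $z$ or $S$, their first derivatives stay bounded, and the Riccati self-interaction drives $\partial_y w \to -\infty$ at the predicted time $T_* \approx 2/(1+\alpha)$ (with the coupling contributing only an $O(\eps)$ correction tracked by modulation variables).

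Next I would extract the cusp profile via Lagrangian inversion. Let $x_*$ be the Lagrangian minimizer of $\partial_{x_0} w_0$ (up to $O(\eps)$ modulation) and $y_* = \eta_+(T_*; x_*)$. The $\beta \geq 1$ parameter of the theorem is built into the reference data so that the inverse Lagrangian map along $\eta_+$ has the normal form $\eta_+(T_*; x_0) - y_* \sim c\,\sgn(x_0 - x_*)|x_0 - x_*|^{(\beta+1)/\beta}$ near $x_*$, with the conjugate-exponent $\beta/(\beta-1)$ of the initial-data regularity class capturing the dual threshold for this degree of flatness. Modulation parameters $(\tau(t), \xi(t))$ absorb the $O(\eps)$ ambiguity in $(T_*, y_*)$ and enforce orthogonality against the marginal Taylor modes. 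Inverting the characteristic map and composing with $w_0$ yields $w(y, T_*) - w(y_*, T_*) = -C\sgn(y-y_*)|y-y_*|^{\beta/(\beta+1)} + (1+O(\eps))(y-y_*) + \cdots$, with the cusp inherited equally by $u = (w+z)/2$ and $\sigma = (w-z)/2$ (since $z$ remains $C^2$) and the linear term coming from the regular Taylor coefficient of $w_0$ at $x_*$. The entropy profile follows from pulling $S_0$ back through the inverse Lagrangian map of $\eta_0$, giving the stated regular expansion with remainder $O(|y-y_*|^{(\beta+2)/(\beta+1)})$.

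The main obstacle will be the $L^p$ energy estimates for $p \in (1,\infty)$ on $\partial_y^2 w$, $\partial_y^2 z$, $\partial_y^2 k$ uniformly up to $T_*$, with $p = \beta/(\beta-1)$ and a quantitative margin that survives the $O(\eps)$ perturbation of the data. The Hilbertian $L^2$ analysis fails for $p \neq 2$ because integration-by-parts identities generate distributional terms of the wrong sign, so one must exploit the precise algebraic structure of the differentiated Euler system along $\eta_+$ (the ``new formulation'' mentioned in the abstract) to convert those into coercive $L^p$-duality pairings, with the Riccati sign $-\tfrac{1+\alpha}{2}(\partial_y w)^2$ providing the dominant coercivity. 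The endpoints $p = \infty$ (i.e., $\beta = 1$, where pointwise control of second derivatives is needed) and $p \to 1$ (i.e., $\beta \to \infty$, where the initial-data regularity degenerates toward $W^{2,1}$) demand particular care, with the modulation normalization chosen $\beta$-dependently to preserve the cusp exponent. A secondary difficulty is ensuring consistency of the modulation ansatz on $[0, T_*)$ so that the limiting profile at $T_*$ matches exactly the Lagrangian prediction, and propagating the dominance of the $w$-variable's singularity (which is what allows the footnoted ``weaker assumption for the dominant Riemann variable'') throughout the evolution.
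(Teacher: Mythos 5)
Your high-level direction (Riemann variables, a cusp extracted from the degeneracy of the fast characteristic map, $L^p$ estimates on second-order quantities, Lagrangian inversion to recover the $\beta/(\beta+1)$ exponent) matches the paper's, but two of the concrete steps in your proposal would not work as written, and a third departs from the paper substantially. First, your proposed uniform-in-time $L^p$ energy estimate on $\partial_y^2 w$ cannot hold: the Riccati mechanism you invoke drives $\partial_y w\to-\infty$ as $t\to T_*$, so $\partial_y^2 w$ is even more singular at $T_*$; it is not a quantity one can bound in $L^p_y$ up to blowup. The paper's $L^p$ energy functional is $E_p=\int_\TT\Sigma^{-bp}\,\eta_x\,|\Z_t|^p\,dx$, built from $\Z_t=\partial_t(q^z\circ\eta)$ — i.e.\ a quantity tracking second derivatives of the \emph{subdominant} variables $z,k$ composed with the fast flow — and the only $w$-quantity that appears in the scheme is $W_x=\partial_x(w\circ\eta)=\eta_x\,\partial_y w\circ\eta$, which stays $O(1)$ (it is $\bar w_0'+\OO(\eps)$) precisely because the Jacobian factor $\eta_x$ cancels the blowup. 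Second, you propose to propagate $w$, $z$, $S$ each along its own characteristic ($\eta_+$, $\eta_-$, $\eta_0$). The paper's central new idea, explicitly flagged, is the opposite: all three variables and their derivatives are followed along the single fast characteristic $\eta$, which ``freezes'' the dominant Riemann variable and the entropy in that flow and reduces the whole estimate to bounding $\Z$. Tracking $z$ along $\eta_-$ and $k$ along $\eta_0$ reintroduces the change-of-characteristic integrals that the paper works hard to avoid, and you would then have to prove uniform Jacobian bounds for $\psi_x$ and $\phi_x$ simultaneously with the $w$-blowup — feasible (the paper does establish $\psi_x\sim 1$, $\phi_x\sim 1$ as auxiliary facts), but not the route that makes the $L^p$ argument close cleanly.

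Third, your ansatz with modulation parameters $(\tau(t),\xi(t))$ and orthogonality conditions is unnecessary and is not how the paper proceeds. Because the reference datum $\bar w_0$ has an explicit flat minimum of $\bar w_0'$ at $x=0$ with the $|x|^{1/\beta}$ normal form, and because the Hölder estimates give $[W_x-w_0']_{C^{0,1/p'}}=\OO(\eps)$ uniformly up to $T_*$, the blowup label is pinned at $x_*=0$ and $T_*$, $y_*=\eta(0,T_*)$ are computed, not modulated; the cusp coefficient $(1+\tfrac1\beta)^{\beta/(\beta+1)}$ then falls out of inverting $y-y_*=(\tfrac\beta{\beta+1}+\OO(\eps))\,\sgn(x)|x|^{1+1/\beta}$. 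You would also need to fix a sign in your displayed system: in these variables both the $w$-equation and the $z$-equation carry $+\tfrac{\alpha}{2\gamma}\sigma^2\partial_y S$ as the forcing, not opposite signs. Your observation that the entropy profile is obtained by pulling $k_0$ back through the particle-path flow $\phi$ is correct and is used in the paper (via $k\circ\phi=k_0$ and the resulting identity for $\partial_y^2 k$), but the final Hölder estimate $\|k(\cdot,t)\|_{C^{1,1/(\beta+1)}}\lesssim\eps$ is produced from the $\eta$-composed quantities and the map estimate $|x_2-x_1|\lesssim|y_2-y_1|^{\beta/(\beta+1)}$, not directly from $\phi$.
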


The above theorem provides the first constructive proof of stable  $C^{\frac{\beta}{\beta+1}}$-cusp type singularity formation for $\beta \ge 1$.   
We refer to \S~\ref{sec:ID} and Theorem~\ref{thm:main} below for the precise assumptions on the initial data, and the precise statement of our main result.

\subsection{Prior results}
\label{sec:prior}

It has been known for some time that solutions of  the Euler equations and more general systems of hyperbolic PDE can 
form singularities in finite time from smooth initial data, see the seminal papers~\cite{lax1964development,John74,liu1979development,majda1984compressible,sideris1985formation}. These classical results were non-constructive and asserted a breakdown of smoothness\footnote{These arguments are based on an ODE comparison-principle (which necessitates a priori assumptions on the boundedness of the density and the continuity of the gradient of the  solution) with a Riccati equation which blows up in finite time, thus yielding a proof by contradiction.}  without a description of the actual mechanism of singularity formation. See~\cite{chen2002cauchy,dafermos2005hyperbolic,liu2021shock} for further references.

Recently, a number of constructive {\it shock formation} results have 
been established~\cite{lebaud1994description,chen2001formation,Kong2002,yin2004formation,christodoulou2007formation,luk2018shock,BuShVi2019a,BuShVi2019b,BuShVi2020,BuDrShVi2021,luk2021stability,buckmaster2022formation,neal2023}. These constructive results show that for an appropriate class of smooth initial data the {\em first} gradient singularity is of shock-type and no other singularity can occur prior. The solution at this first singularity, i.e.~the {\em pre-shock}, only possesses limited H\"older regularity, which describes the type of cusp that forms.  To date, only the $C^{\frac{1}{3}}$  H\"{o}lder class pre-shock (corresponding to $\beta = \tfrac{1}{2}$ in the language of Theorem~\ref{thm:abbreviated}) has been extensively studied~\cite{lebaud1994description,chen2001formation,Kong2002,BuShVi2019a,BuShVi2019b,BuShVi2020,BuDrShVi2021,neal2023}, as this $C^{\frac{1}{3}}$ cusp singularity emerges in a stable fashion from a large open set of smooth and {\it generic} initial data. 

To the best of our knowledge, for the compressible Euler dynamics there are no results establishing the formation of $C^{\frac{\beta}{\beta+1}}$-cusp type pre-shocks with $\beta > \frac{1}{2}$, corresponding to H\"older exponents {\em strictly larger} than $\frac 13$. 
In fact, the intuition based on the study of the Burgers equation\footnote{For $\beta > 0$, a family of globally self-similar solutions to the 1D Burgers equation  with a $C^{\frac{\beta}{\beta+1}}$ profile have been known for some time~\cite{eggers2008role, collot2018singularityburgers}.} only suggests the possible emergence of a $C^\frac{1}{2k+1}$-cusp, corresponding to $\beta = \frac{1}{2k} $ with  $k\ge 1$ an integer. Moreover, one expects in these particular cases that the shock formation process is generically unstable, but has finite-codimension stability. This intuition was successfully implemented in the context of the 2D Euler equations with azimuthal symmetry to establish the finite time formation of a $C^{\frac{1}{5}}$ pre-shock (corresponding to $\beta=\frac 14$) in \cite{buckmaster2022formation}. The  finite-codimension stable formation of $C^\frac{1}{2k+1}$-cusp type pre-shocks (corresponding to  $\beta = \frac{1}{2k}$) for all integers $k\geq 2$ was established recently  in~\cite{IyRiShVi2023}.

\subsection{New Ideas}
\label{sec:ideas}

This paper makes use of the   {\it{differentiated Riemann variables}}  \eqref{def:q^w} first introduced in \cite{BuDrShVi2021} and later utilized in \cite{neal2023} to study the generic $C^ {\frac{1}{3}} $ pre-shock formation from $C^4$ initial data.   The
smoothness of the data employed in \cite{BuDrShVi2021,neal2023}  permitted the use of  a pointwise charicteristic-based approach.  On the other hand, the initial data required for the formation  of $C^{\frac{\beta}{\beta+1}}$ pre-shocks with
$\beta \ge1$ have limited regularity and, in particular, such data is not even $C^2$ (see \S~\ref{sec:ID}).  
As such, we have devised a novel  approach to this analysis which consists of the following elements:
\begin{itemize}
\item[1.]  Both the dominant and subdominant differentiated Riemann variables, together with the entropy, are studied 
along the {\it fast acoustic characterisic} (rather than the actual characteristic families which propagate each 
disturbance).  This allows us to {\it effectively freeze}  both the dominant Riemann variable and the entropy in this flow,
reducing the required bounds to only the subdominant Riemann variable (and its derivatives).

\item[2.]  $L^p$-based energy estimates are then obtained for this system of variables, assuming limited regularity
initial data in the Sobolev space $W^{2,p}$.  This energy method allows for a spatially-global analysis for functions
whose second derivatives are not continuous, and produces bounds which are uniform in $p$ and time, and hence
allow us to pass to the limit as $p \to \infty $.

\item[3.] We show that there exists an open set of initial data in the $W^{2,p}$ topology for which 
 $C^{\frac{\beta}{\beta+1}}$ pre-shock formation occurs for $\beta\ge1$.  This establishes that this new class of cusp-type
 first singularities is stable under perturbation of the initial data by small disturbances of class 
  $W^{2,\frac{\beta}{\beta-1}}(\mathbb{T})$ (or smoother).

\end{itemize}

\subsection{Riemann variables}
\label{sec:RV}

The Riemann variables $w$ and $z$ are defined by
\begin{align}\label{riemann}
w : = u + \sigma \,,
\qquad \mbox{and} \qquad 
z := u-\sigma.
\end{align}
If we further adopt the notation $k : = S$, then the Euler equations become
\begin{align}
\label{eq:euler:RV}
\p_t w + \lambda_3 \p_y w & = \tfrac{\alpha}{2\gamma} \sigma^2 \p_y k, \notag \\
\p_t z + \lambda_1 \p_y z & = \tfrac{\alpha}{2\gamma} \sigma^2 \p_y k, \notag \\
\p_t k + \lambda_2 \p_y k & = 0,
\end{align}
where
\begin{align*}
\lambda_1 & : = u-\alpha \sigma = \tfrac{1-\alpha}{2} w + \tfrac{1+\alpha}{2} z, \\
\lambda_2 & : = u = \tfrac{1}{2} w + \tfrac{1}{2} z, \\
\lambda_3 & : = u+ \alpha \sigma = \tfrac{1+\alpha}{2} w + \tfrac{1-\alpha}{2} z.
\end{align*}
Define $\psi, \phi,$ and $\eta$ to be the flow of $\lambda_1, \lambda_2,$ and $\lambda_3$ respectively.

\section{Motivation: Burgers' Equation}
\label{sec:motivation}

Our motivation comes from a simple solution to Burgers' equation,
\begin{equation}
\label{eq:burgers}
w_t + ww_y = 0,
\end{equation}
which is a special case of \eqref{eq:euler:RV} where $\alpha =1$, $z_0 \equiv 0$, and $k_0$ is constant. In addition to being a special case of the 1D isentropic Euler equations expressed in Riemann variables, Burgers' equation is the archetypal equation for shock formation and development.

If $w$ is a solution of \eqref{eq:burgers} and $\eta$ is the flow of $w$, i.e. $\eta$ solves
$$
\tfrac{d}{dt} \eta(x,t) = w(\eta(x,t),t),
\qquad
\eta(x,0) = x,
$$
then $w$ must remain constant along the flow lines of $\eta$. From this it follows that if $w(x,0) = w_0(x)$ then
\begin{equation}
\eta(x,t) = x+ tw_0(x).
\end{equation} 
When $w_0$ is $C^1$ and $w_0'$ attains its global minimum at $x_*$ the solution of Burgers' equation remains $C^1$ smooth up to time
$$ T_* = \begin{cases} -\frac{1}{w'_0(x_*)} \hspace{10mm} w'_0(x_*) < 0 \\ +\infty \hspace{15mm} w'_0(x_*) \geq 0 \end{cases}, $$
and in the case where $w'_0(x_*) < 0$ we have $\eta_x(x_*,T_*) = 0$ and $\p_y w(\eta(x_*,T_*), T_*) = -\infty$.

\subsection{Key example}
\label{sec:example}

Let $\beta > 0$ and let $w$ be the solution of Burgers' equation \eqref{eq:burgers} on $\RR$ with initial data
\begin{equation*}
w_0(x) = -x + Cx|x|^{\frac{1}{\beta}}
\end{equation*}
for some $C > 0$. We compute that
$$ w'_0(x) = -1 + (1+\tfrac{1}{\beta}) C |x|^{\frac{1}{\beta}}, $$
so $w'_0$ has a unique global minimum at $x=0$, and it blows up at time $T_* =1$. At the blowup time, we have
$$ \eta(x,1) = Cx|x|^{\frac{1}{\beta}}. $$
Therefore, if $y = \eta(x,1)$ then
$$ x = \sgn(y) \big(\tfrac{|y|}{C}\big)^{\frac{\beta}{\beta+1}}, $$
and
$$ w(y,1) = w_0(x) = - \sgn(y) C^{-\frac{\beta}{\beta+1}} |y|^{\frac{\beta}{\beta+1}} + y. $$

\subsection{Stability for $\beta \geq 1$}
\label{sec:stable:burgers}

When $\beta \geq 1$, the geometry of the shock profile $w(\cdot, T_*)$ obtained in \S~\ref{sec:example} is stable under perturbations in $C^{1, \frac{1}{\beta}}$. Let $\mathcal E \in C^{1,\frac{1}{\beta}}_{\loc}(\RR)$ be such that
\begin{itemize}
\item $\mathcal E'(0) < 1$, and
\item $[\mathcal E']_{C^{0,\frac{1}{\beta}}} < (1+ \frac{1}{\beta}) C$,
\end{itemize}
and let
$$ w_0(x) = -x + Cx|x|^{\frac{1}{\beta}} + \mathcal E(x). $$
Then
\begin{align*}
w'_0(x) & = -1 + \mathcal E'(0) + (1+ \tfrac{1}{\beta}) C|x|^{\frac{1}{\beta}} + (\mathcal E'(x)-\mathcal E'(0)) \\
& \geq -1 + \mathcal E'(0) + \big[ (1+ \tfrac{1}{\beta}) C - [\mathcal E']_{C^{0,\frac{1}{\beta}}} \big] |x|^{\frac{1}{\beta}}.
\end{align*}
So $w'_0$ still has a unique global minimum at $x = 0$. Since $\mathcal E'(0) < 1$, the minimum of $w'_0$ is negative and we have
$$ T_* = \frac{1}{1-\mathcal E'(0)}. $$
For all $x$ we have
$$ \mathcal E(x) = \mathcal E(0) + \mathcal E'(0) x + h(x)x|x|^{\frac{1}{\beta}} $$
where
$$ h(x) = \frac{\int^x_0 \mathcal E'(x') - \mathcal E'(0) \: dx'}{x|x|^{\frac{1}{\beta}}}. $$
It is immediate that
$$ |h(x)| \leq \tfrac{\beta}{1+\beta}  [\mathcal E']_{C^{0,\frac{1}{\beta}}}  < C $$
everywhere. If we let $y = \eta(x,T_*)$, then we have
\begin{align*}
y 
= x + T_*w_0(x) 
= T_* \mathcal E(0) + T_*(C + h(x)) x|x|^{\frac{1}{\beta}}  
=: y_* +a(x) x|x|^{\frac{1}{\beta}}.
\end{align*}
Since $|h(x)| < C$ everywhere, we have $a(x) > 0$ everywhere, so $\sgn(y-y_*) = \sgn(x)$ and
$$ x = \sgn(y-y_*) \big\vert \frac{y-y_*}{a(x)}\big\vert^{\frac{\beta}{\beta+1}}. $$
It now follows that
\begin{align*}
w(y,T_*)  
= w_0(x)
& = -x + Cx|x|^{\frac{1}{\beta}} + \mathcal E(x) \\
& = \mathcal E(0) + (\mathcal E'(0) -1) x + (C + h(x))\sgn(x) |x|^{1+\frac{1}{\beta}} \\
& = \mathcal E(0) - \tfrac{(1-\mathcal E'(0))}{|a(x)|^{\frac{\beta}{\beta+1}}} \sgn(y-y_*) |y-y_*|^{\frac{\beta}{\beta+1}} + (C+h(x)) \sgn(y-y_*) \big\vert \frac{y-y_*}{a(x)}\big\vert \\
& = \mathcal E(0) - \tfrac{(1-\mathcal E'(0))}{|a(x)|^{\frac{\beta}{\beta+1}}} \sgn(y-y_*) |y-y_*|^{\frac{\beta}{\beta+1}} + (1-\mathcal E'(0)) (y-y_*).
\end{align*}

We have proven the following: 
\begin{proposition}[\bf Motivating Example: Stable Shock Formation for Burgers]
\label{prop:burgers}
If $\beta \geq 1$, $C > 0$, and
$$ w_0 = -x + Cx|x|^{\frac{1}{\beta}}+ \mathcal E(x), $$
where $\mathcal E \in C^{1,\frac{1}{\beta}}_{\loc}(\R)$ with $\mathcal E'(0) < 1$ and $[\mathcal E']_{C^{0,\frac{1}{\beta}}} < (1+ \frac{1}{\beta}) C$ then Burgers' equation \eqref{eq:burgers} has a unique $C^1$ solution, $w$, with $w\big\vert_{t=0} = w_0$ on $\R\times [0,T_*)$, where $T_* = \tfrac{1}{1-\mathcal E'(0)}$. Moreover, $w$ is continuous up to time $T_*$ and at time $T_*$ we have
\begin{equation}
\label{eq:prop:burger}
w(y,T_*) = \mathcal E(0) -\sgn(y-y_*)b(y)|y-y_*|^{\frac{\beta}{\beta+1}} + (1-\mathcal E'(0)) (y-y_*)
\end{equation}
where $y_* : = \tfrac{\mathcal E(0)}{1-\mathcal E'(0)}$ and the function $b$ satisfies
\begin{align*}
\frac{((1- \mathcal E'(0))^{\frac{2\beta+1}{\beta+1}}}{\big( C + \frac{\beta}{\beta+1}  [\mathcal E']_{C^{0,\frac{1}{\beta}}} \big)^{\frac{\beta}{\beta+1}}} < b(y) < \frac{(1- \mathcal E'(0))^{\frac{2\beta+1}{\beta+1}}}{\big( C - \frac{\beta}{\beta+1}  [\mathcal E']_{C^{0,\frac{1}{\beta}}} \big)^{\frac{\beta}{\beta+1}}}
\end{align*}
for all $y \in \RR$.
\end{proposition}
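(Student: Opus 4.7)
The plan is to upgrade the explicit calculations of \S~\ref{sec:stable:burgers} into a rigorous proof by verifying three points: (i) that the characteristic map $\eta(\cdot, t)$ is a $C^1$ diffeomorphism of $\R$ for $t \in [0, T_*)$, delivering a unique $C^1$ classical solution; (ii) that $\eta(\cdot, T_*)$ is still a homeomorphism of $\R$, so that $w$ extends continuously to $t = T_*$; and (iii) that composing $w_0$ with the inverse of $\eta(\cdot, T_*)$ produces the profile \eqref{eq:prop:burger} together with the two-sided bound on $b$.

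For step (i), I would expand
\begin{equation*}
w_0'(x) = -1 + \mathcal E'(0) + (1+\tfrac{1}{\beta}) C|x|^{1/\beta} + \bigl(\mathcal E'(x) - \mathcal E'(0)\bigr)
\end{equation*}
and combine the H\"older bound $|\mathcal E'(x) - \mathcal E'(0)| \le [\mathcal E']_{C^{0,1/\beta}}|x|^{1/\beta}$ with the hypothesis $[\mathcal E']_{C^{0,1/\beta}} < (1+\tfrac{1}{\beta})C$ to conclude $w_0'(x) > w_0'(0) = \mathcal E'(0)-1$ for every $x \neq 0$. The strictly negative global minimum is thus attained uniquely at $x=0$, giving $T_* = 1/(1-\mathcal E'(0))$, and $\eta_x(x,t) = 1 + t w_0'(x) \ge 1 - t/T_* > 0$ for $t \in [0,T_*)$. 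Consequently $\eta(\cdot,t)$ is a proper, strictly increasing $C^1$ map of $\R$ on that interval, and the standard method-of-characteristics argument yields the unique $C^1$ solution $w(y,t) = w_0(\eta^{-1}(y,t))$.

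For steps (ii) and (iii), I would invoke the pointwise decomposition
\begin{equation*}
\mathcal E(x) = \mathcal E(0) + \mathcal E'(0)\,x + h(x)\, x|x|^{1/\beta},
\qquad
h(x) := \frac{\int_0^x \bigl(\mathcal E'(x') - \mathcal E'(0)\bigr)\, dx'}{x|x|^{1/\beta}},
\end{equation*}
valid for $x \ne 0$ and satisfying the uniform bound $|h(x)| \le \tfrac{\beta}{\beta+1}[\mathcal E']_{C^{0,1/\beta}} < C$. This gives $\eta(x,T_*) = y_* + a(x)\,x|x|^{1/\beta}$ with $y_* = T_*\mathcal E(0)$ and $a(x) = T_*(C+h(x))$ uniformly bounded above and below by positive constants. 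Hence $\eta(\cdot,T_*)$ is an orientation-preserving homeomorphism of $\R$, $\sgn(y-y_*) = \sgn(x)$ for $y \ne y_*$, and inversion yields $x(y)$ of H\"older class $\beta/(\beta+1)$. Substituting into $w_0$ and using $T_* = 1/(1-\mathcal E'(0))$ produces \eqref{eq:prop:burger} with $b(y) = (1-\mathcal E'(0))^{(2\beta+1)/(\beta+1)}\bigl(C+h(x(y))\bigr)^{-\beta/(\beta+1)}$, and the claimed two-sided bound on $b$ is then a direct consequence of the two-sided bound on $h$. The only delicate point I anticipate is the continuity of $w$ across $t = T_*$: this will follow from the uniform convergence $\eta(\cdot,t) \to \eta(\cdot,T_*)$ on compact sets combined with uniform equicontinuity of $\eta^{-1}(\cdot,t)$ near $y_*$, itself a consequence of the uniform positivity of $a$. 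Beyond this mild limiting argument, the entire proof reduces to bookkeeping for an invertible change of variables.
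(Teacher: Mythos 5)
Your proposal follows the same route as the paper's proof in \S~\ref{sec:stable:burgers}: bound $w_0'$ below via the H\"older seminorm to locate the unique minimum at $x=0$, decompose $\mathcal E(x) = \mathcal E(0) + \mathcal E'(0)x + h(x)x|x|^{1/\beta}$ with the same bound $|h| \le \tfrac{\beta}{\beta+1}[\mathcal E']_{C^{0,1/\beta}} < C$, write $\eta(x,T_*) = y_* + a(x)x|x|^{1/\beta}$, and invert. The only addition is your explicit remark on continuity of $w$ up to $t=T_*$ via uniform equicontinuity of $\eta^{-1}$, which the paper leaves implicit; everything else matches, including the closed form for $b(y)$.
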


In this paper, we will generalize Proposition \ref{prop:burgers} by letting the parameter $\alpha$ be any positive number (i.e. letting $\gamma$ be any number greater than $1$) and by letting $z_0$ and $k_0- \fint_\TT k_0$ be sufficiently small instead of identically zero. We will also work on the torus $\TT$ instead of $\RR$. See Theorem \ref{thm:main} below for details.

\subsection{Finite-Codimension Stability for $0< \beta < 1$, $\beta \neq \frac{1}{2}$}
\label{sec:unstable:burgers}

In the case where $0 < \beta < 1$ and $\beta \neq \frac{1}{2}$, the geometry of the shock profile obtained in  \S~\ref{sec:example}  is no longer stable under smooth perturbations. To see this, let $0< \beta <1$ and let the perturbation now be $\mathcal E(x) = \frac{\eps}{2}x^2 \varphi(x)$ where $\eps \neq 0$ is a small constant and $\varphi$ is a smooth bump function that is identically 1 for all $|x| \leq R$, for some $R\gg1$. Then one can check that $w'_0$ has a unique global minimum at the point
$$ x_* = - \left\vert \frac{\eps \beta^2}{(\beta+1) C} \right\vert^{\frac{\beta}{1-\beta}}, $$
and
$$ w'_0(x_*) = -1 -\frac{\eps^{\frac{1}{1-\beta}}}{((1+\frac{1}{\beta})C)^{\frac{\beta}{\beta-1}}} \bigg[ \beta^{-\frac{\beta}{1-\beta}}-\beta^{-\frac{1}{1-\beta}}\bigg] < 0. $$
So the solution $w$ still blows up in finite time, but since $x_* \neq 0$ one can now check that if $y = \eta(x,T_*)$ and $y_* = \eta(x_*,T_*)$ we have
$$ y-y_* \sim (x-x_*)^3 $$
in a sufficiently small neighborhood\footnote{ One can check that the implicit constants in the expression $y-y_* \sim (x-x_*)^3$ can be made $\sim |\eps|^{1-\frac{\beta}{1-\beta}}$, and therefore they blow up as $\eps \rightarrow 0$ when $\beta > \frac{1}{2}$,  remain close to 1 when $\beta = \frac{1}{2}$, and vanish as $\eps \rightarrow 0$ when $\beta < \frac{1}{2}$.} of $x_*$  and therefore
$$ w(y,T_*)-w(y_*,T_*) \sim (y-y_*)^{\frac{1}{3}} $$
for $|y-y_*|$ small enough. When $\beta = \frac{1}{2}$, this is still the same type of shock profile as before, but for all other choices of $\beta$ it is different. Perturbations of the form we have specified can be made arbitrarily small in all $C^N$ norms by sending $\eps \rightarrow 0$, so for $0< \beta<1, \beta \neq \frac{1}{2}$, the shock formation described in \S~\ref{sec:example}  is unstable.

However, when $0< \beta <1$, the example in \S~\ref{sec:example} is {\it{finite-codimension-stable}} with respect to suitable perturbations, as we shall show next. Let $1+\frac{1}{\beta} = n + \delta$, where $n \in \mathbb{Z}$ and $\delta \in (0,1]$. Since $\beta <1$, we have $n \geq 0$. Let $\mathcal E \in C^{n, \delta}_{\loc}(\RR)$ satisfy
\begin{itemize}
\item $ \mathcal E'(0) < 1$,
\item $\p^j_x\mathcal E(0) = 0$ for all $2 \leq j \leq n$,
\item $[\p^n_x \mathcal E]_{C^{0,\delta}} < C(1+\tfrac{1}{\beta})(1+\tfrac{1}{\beta}-1)\cdots (1+\tfrac{1}{\beta}-(n-1)), $
\end{itemize}
and choose initial data
$$ w_0(x) = -x + Cx|x|^{\frac{1}{\beta}} + \mathcal E(x). $$
We compute that
\begin{align*}
w'_0(x) & = -1 + \mathcal E'(0) + (1+ \tfrac{1}{\beta}) C|x|^{\frac{1}{\beta}} + (\mathcal E'(x)-\mathcal E'(0)) \\
& \geq -1 + \mathcal E'(0) + \bigg[ (1+ \tfrac{1}{\beta}) C - \tfrac{[\p^n_x\mathcal E]_{C^{0,\delta}}}{\tfrac{1}{\beta}(\tfrac{1}{\beta}-1) \cdots (\tfrac{1}{\beta}+2-n)} \bigg] |x|^{\frac{1}{\beta}}.
\end{align*}
So $w'_0$ has a unique global minimum at $x =0$. Since $\mathcal E'(0) < 1$, the minimum of $w'_0$ is negative and we have
$$ T_* = \frac{1}{1-\mathcal E'(0)} $$
as before. For all $x$ we have
$$ \mathcal E(x) = \mathcal E(0) + \mathcal E'(0) x + h(x)x|x|^{\frac{1}{\beta}} $$
where
$$ h(x) = \frac{\int^x_0 \int^{x_1}_0 \cdots \int^{x_{n-1}}_0 \p^n_x\mathcal E(x_n) \: dx_n dx_{n-1} \hdots dx_1}{x|x|^{\frac{1}{\beta}}}. $$
It is immediate that
$$ |h(x)| \leq \frac{[\p^n_x \mathcal E]_{C^{0,\delta}}}{(1+\tfrac{1}{\beta})(1+\frac{1}{\beta}-1) \cdots (1+\tfrac{1}{\beta}-(n-1))}  < C $$
for all $x \in \RR$. Therefore, if $y = \eta(x,T_*)$ then
\begin{align*}
y 
= x + T_*w_0(x)
= T_* \mathcal E(0) + T_*(C + h(x)) x|x|^{\frac{1}{\beta}} 
=: y_* +a(x) x|x|^{\frac{1}{\beta}}.
\end{align*}
Since $|h(x)| < C$ everywhere, we have $a(x) > 0$ everywhere, so $\sgn(y-y_*) = \sgn(x)$ and
$$ x = \sgn(y-y_*) \big\vert \frac{y-y_*}{a(x)}\big\vert^{\frac{\beta}{\beta+1}}. $$
It now follows that
\begin{align*}
w(y,T_*) 
= w_0(x)
& = -x + Cx|x|^{\frac{1}{\beta}} + \mathcal E(x) \\
& = \mathcal E(0) + (\mathcal E'(0) -1) x + (C + h(x))\sgn(x) |x|^{1+\frac{1}{\beta}} \\
& = \mathcal E(0) - \tfrac{(1-\mathcal E'(0))}{|a(x)|^{\frac{\beta}{\beta+1}}} \sgn(y-y_*) |y-y_*|^{\frac{\beta}{\beta+1}} + (C+h(x)) \sgn(y-y_*) \big\vert \frac{y-y_*}{a(x)}\big\vert \\
& = \mathcal E(0) - \tfrac{(1-\mathcal E'(0))}{|a(x)|^{\frac{\beta}{\beta+1}}} \sgn(y-y_*) |y-y_*|^{\frac{\beta}{\beta+1}} + (1-\mathcal E'(0)) (y-y_*).
\end{align*}

We have proven the following: 
\begin{proposition}[\bf Finite-Codimension Stable Shock Formation for Burgers]
\label{prop:burgers:unstable}
If $0< \beta <1$, $C > 0$, $1+ \frac{1}{\beta} = n + \delta$ with $n \in \mathbb{Z}, \delta \in (0,1]$, and
$$ w_0 = -x + Cx|x|^{\frac{1}{\beta}}+ \mathcal E(x), $$
where $\mathcal E \in C^{n,\delta}_{\loc}(\R)$ with $\mathcal E'(0) < 1$,
$$[\p^n_x \mathcal E]_{C^{0,\delta}} < C(1+\tfrac{1}{\beta})(1+\tfrac{1}{\beta}-1)\cdots (1+\tfrac{1}{\beta}-(n-1)), $$
and $\p^j_x\mathcal E(0) = 0$ for all $2 \leq j \leq n$, then Burgers' equation \eqref{eq:burgers} has a unique $C^1$ solution, $w$, with $w\big\vert_{t=0} = w_0$ on $\R\times [0,T_*)$, where $T_* = \tfrac{1}{1-\mathcal E'(0)}$. Moreover, $w$ is continuous up to time $T_*$ and at time $T_*$ we have
\begin{equation}
\label{eq:prop:burger:2}
w(y,T_*) = \mathcal E(0) -\sgn(y-y_*)b(y)|y-y_*|^{\frac{\beta}{\beta+1}} + (1-\mathcal E'(0)) (y-y_*)
\end{equation}
where $y_* : = \tfrac{\mathcal E(0)}{1-\mathcal E'(0)}$ and the function $b$ satisfies
\begin{align*}
\frac{((1- \mathcal E'(0))^{\frac{2\beta+1}{\beta+1}}}{\big( C + \tfrac{[\mathcal E']_{C^{0,\frac{1}{\beta}}}}{(1+\tfrac{1}{\beta})(1+\tfrac{1}{\beta}-1)\cdots (1+\tfrac{1}{\beta}-(n-1))}\big)^{\frac{\beta}{\beta+1}}} < b(y) < \frac{(1- \mathcal E'(0))^{\frac{2\beta+1}{\beta+1}}}{\big( C - \frac{[\mathcal E']_{C^{0,\frac{1}{\beta}}}}{(1+\tfrac{1}{\beta})(1+\tfrac{1}{\beta}-1)\cdots (1+\tfrac{1}{\beta}-(n-1))}\big)^{\frac{\beta}{\beta+1}}}
\end{align*}
for all $y \in \RR$.
\end{proposition}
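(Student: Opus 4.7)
The plan is to follow the same structure as the proof of Proposition~\ref{prop:burgers}, adapted to the regime $0 < \beta < 1$, where $\mathcal{E}$ may have several derivatives and the vanishing of intermediate derivatives at $0$ is needed to isolate the leading fractional behavior at the origin. First, I would compute
\[
w_0'(x) = -1 + \mathcal{E}'(0) + (1+\tfrac{1}{\beta}) C |x|^{1/\beta} + (\mathcal{E}'(x) - \mathcal{E}'(0)),
\]
and use the hypotheses $\p_x^j \mathcal{E}(0) = 0$ for $2 \le j \le n$ together with the H\"older bound on $\p_x^n \mathcal{E}$ (applying Taylor's theorem with remainder $n-1$ times) to estimate $|\mathcal{E}'(x) - \mathcal{E}'(0)| \lesssim |x|^{1/\beta}$ with an explicit combinatorial constant coming from iterated integration of $|x_k|^{1/\beta+1-k}$. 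The smallness assumption on $[\p_x^n \mathcal{E}]_{C^{0,\delta}}$ then forces the coefficient of $|x|^{1/\beta}$ in $w_0'$ to be strictly positive, so $0$ is the unique global minimum of $w_0'$; since $\mathcal{E}'(0) < 1$ that minimum is negative, yielding $T_* = 1/(1-\mathcal{E}'(0))$.

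Second, the same iterated-Taylor argument (this time integrated once more) gives
\[
\mathcal{E}(x) = \mathcal{E}(0) + \mathcal{E}'(0) x + h(x) x|x|^{1/\beta},
\]
where $h(x)$ is an explicit iterated integral of $\p_x^n \mathcal{E}$ divided by $x|x|^{1/\beta}$. The H\"older bound yields the uniform estimate $|h(x)| < C$. I would then use the Burgers flow identity at time $T_*$:
\[
\eta(x, T_*) = x + T_* w_0(x) = y_* + a(x)\, x |x|^{1/\beta}, \qquad a(x) := T_*(C + h(x)) > 0,
\]
with $y_* = T_* \mathcal{E}(0)$. Since $a > 0$ everywhere, the map $x \mapsto \eta(x, T_*)$ is a monotone bijection of $\RR$ with $\sgn(y - y_*) = \sgn(x)$, and the relation above is invertible to give $x = \sgn(y-y_*) |(y-y_*)/a(x)|^{\beta/(\beta+1)}$.

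Third, substituting this expression for $x$ back into $w(y, T_*) = w_0(x)$ and collecting the $x|x|^{1/\beta}$ term together with the $-x$ term — exactly as in the $\beta \ge 1$ case — produces the profile formula \eqref{eq:prop:burger:2}. The stated two-sided bound on $b(y)$ follows directly from the two-sided bound on $h(x)$, by plugging into $a(x) = T_*(C + h(x))$ and simplifying using $T_* = 1/(1 - \mathcal{E}'(0))$. Uniqueness and $C^1$-regularity of $w$ on $\RR \times [0, T_*)$ are then obtained from the standard method of characteristics, since $w_0$ is $C^1$ and $\eta_x(x, t) = 1 + t w_0'(x)$ stays bounded away from $0$ on any $[0, T]$ with $T < T_*$.

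The main technical step is the iterated Taylor expansion used in the first two paragraphs above: it is what allows only a H\"older bound on $\p_x^n \mathcal{E}$ to control the difference $\mathcal{E}'(x) - \mathcal{E}'(0)$ at the critical rate $|x|^{1/\beta}$, and it is where the combinatorial factor $(1+\tfrac{1}{\beta})(1+\tfrac{1}{\beta}-1)\cdots(1+\tfrac{1}{\beta}-(n-1))$ enters both the hypothesis and the bound on $b(y)$. Beyond this, the argument is the same explicit characteristic computation as in the $\beta \ge 1$ case, so no further new ideas are needed.
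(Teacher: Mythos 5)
Your proposal matches the paper's proof essentially step for step: the iterated Taylor expansion with vanishing intermediate derivatives at the origin (yielding the combinatorial constant $(1+\tfrac{1}{\beta})(1+\tfrac{1}{\beta}-1)\cdots(1+\tfrac{1}{\beta}-(n-1))$ in the denominator of the bound on $h$), the identification of $x=0$ as the unique blowup label, the computation $y = y_* + a(x)x|x|^{1/\beta}$, and the inversion to obtain the profile. No meaningful differences from the paper's argument.
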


We conjecture that similar finite-codimension stability results could be established for the 1D Euler, and we plan to address this in future work.

\subsection{Stablility for $\beta = \frac{1}{2}$}
In the case $\beta = \frac{1}{2}$, the shock from \S~\ref{sec:example} is stable (see e.g. \cite{neal2023}), but this case is of a different nature than the other choices of $\beta > 0$. In the proofs of both Proposition \ref{prop:burgers} and Proposition \ref{prop:burgers:unstable} we saw that the proof hinged on the blowup label $x_*$ where $w'_0$ has its global minimum coinciding with a place where $w_0$ satisfied
\begin{equation}
\label{eq:blowuplabel}
w_0(x) -w_0(x_*) - (x-x_*)w'_0(x_*) \sim (x-x_*) |x-x_*|^{\frac{1}{\beta}}.
\end{equation}
For a typical smooth function $w_0$  $x_*$ would be a nondegenerate critical point of $w'_0$, so we would expect 
$$  w_0(x) -w_0(x_*) - (x-x_*)w'_0(x_*) \sim (x-x_*)^3 $$
which is exactly what we want in the case $\beta = \frac{1}{2}$. So in \eqref{eq:blowuplabel} is expected when $\beta = \frac{1}{2}$. But, as we saw in \S~\ref{sec:unstable:burgers}, equation \eqref{eq:blowuplabel} being satisfied at the blowup label is not stable under smooth perturbations when $\beta < 1, \beta \neq \frac{1}{2}$.


\subsection{Remark: self-similar blowup}
\label{sec:selfsim}

A self-similar solution to Burgers' equation is a solution $w: \RR \times (-\infty, T) \rightarrow \RR$ of the form
\begin{equation}
\label{eq:selfsim}
w(y,t) = (T-t)^\beta W\big(\tfrac{y-y_0}{(T-t)^{\beta+1}}\big)
\end{equation}
where $W \in C^1_{\text{loc}}(\RR;\RR)$ is the {\it{similarity profile}}, $T \in \RR$ is the fixed blowup time, and $\beta > 0$ \footnote{We are ignoring the $\beta = 0$ case, where the self-similar profile is given by $W(x) = -x$. In this case, the profile doesn't satisfy the matching condition $W \sim |x|^{\frac{\beta}{\beta+1}}$ as $x \rightarrow \pm \infty$, and so it is different. See \cite{eggers2008role} for more.} is the {\it{scaling exponent}}. A function $w$ of the form \eqref{eq:selfsim} solves Burgers' equation if and only if the similarity profile solves the {\it{similarity equation}}
\begin{equation}
\label{eq:similarity}
-\beta W+ (1+\beta)x W_x + WW_x = 0.
\end{equation}

Consider the implicit equation
\begin{equation}
\label{eq:implicit}
x = -W-CW|W|^{\frac{1}{\beta}}
\end{equation}
where $C> 0$ is a constant.\footnote{ The previous literature up to this point has worked with the implicit equation $x=-W-C W^{1+\frac{1}{\beta}}$, see e.g.   \cite[ \S 11.2]{eggers2015singularities}, \cite[\S 2.4]{eggers2008role}, \cite{collot2018singularityburgers}.  However, this equation does not define a global solution for most values of $\beta$. For example, if $\beta =1$ then solution $W$ is not single-valued and there is no solution for $x > \frac{1}{4C}$. Our implicit equation remedies these issues, and gives the correct regularity which the similarity profile should have near $x=0$. E.g., when $\beta =1$ the similarity profile will be $C^{1,1}$ but not $C^2$.} Since $\frac{dx}{dW} = -1-C(1+\frac{1}{\beta})|W|^{\frac{1}{\beta}} \leq -1$ everywhere, it follows that $W \rightarrow x$ is a $C^1$ diffeomorphism of $\RR$ and therefore \eqref{eq:implicit} defines a $C^1$ similarity profile $W$ on all of  $\RR$ which is also strictly decreasing. In fact, $W$ is locally $C^{n,\delta}$, where $n \in \mathbb{Z}^+, \delta \in (0,1]$, and $1+\frac{1}{\beta} = n+\delta$. It is also easy to check that this profile $W$ solves the similarity equation \eqref{eq:similarity}.

Let $W$ be the solution to \eqref{eq:implicit}. To determine the growth of $W$ at infinity, we first note that $x = -W(1+C|W|^{\frac{1}{\beta}})$, so $\sgn(x) = -\sgn(W)$ everywhere. Therefore,
\begin{align*}
\frac{-\sgn(x)C^{-\frac{\beta}{\beta+1}}|x|^{\frac{\beta}{\beta+1}}}{W(x)} & = \frac{|x|^{\frac{\beta}{\beta+1}}}{|W|} 
= \bigg\vert \frac{W(1+C|W|^{\frac{1}{\beta}})}{|W|^{1+\frac{1}{\beta}}} \bigg\vert^{\frac{\beta}{\beta+1}} 
= \bigg\vert \frac{1}{|W|^{\frac{1}{\beta}}} + C \bigg\vert^{\frac{\beta}{\beta+1}}.
\end{align*}
Since $|W| \rightarrow \infty$ as $|x| \rightarrow \infty$, we have
\begin{equation}
\label{eq:asymptotics}
\frac{-\sgn(x)C^{-\frac{\beta}{\beta+1}}|x|^{\frac{\beta}{\beta+1}}}{W(x)}  \rightarrow 1 \qquad \text{ as } x \rightarrow \pm \infty.
\end{equation}
From \eqref{eq:asymptotics} it follows that if $w$ is defined by \eqref{eq:selfsim} with our choice of $W$ then
\begin{equation}
\label{eq:ssprofile}
\lim_{t \rightarrow T^-} w(y,t)  = -\sgn(y-y_0) C^{-\frac{\beta}{\beta+1}} |y-y_0|^{\frac{\beta}{\beta+1}}.
\end{equation}

Let us now examine the behavior of $W$ near $x =0$. Since $x = -W(1+C|W|^{\frac{1}{\beta}})$, we compute that in a neighborhood of $x =0$ we have
\begin{align*}
-x+Cx|x|^{\frac{1}{\beta}} & = W+CW|W|^{\frac{1}{\beta}}-CW|W|^{\frac{1}{\beta}} (1+C|W|^{\frac{1}{\beta}})^{\frac{1}{\beta}} - C^2 W |W|^{\frac{2}{\beta}} (1+ C|W|^{\frac{1}{\beta}})^{\frac{1}{\beta}} \\
& = W- (1+\tfrac{1}{\beta})C^2 W|W|^{\frac{2}{\beta}} + \OO \big( (1+\tfrac{1}{\beta}) C^3 |W|^{1+\frac{3}{\beta}} \big) \\
& = W + \OO\big( (1+\tfrac{1}{\beta})C^2 |x|^{1+\frac{2}{\beta}} \big).
\end{align*}
The last equation is true because $W(0) = 0$ and $W$ is $C^1$. Therefore, we conclude that
\begin{equation}
\label{eq:ssnearzero}
W = -x+Cx|x|^{\frac{1}{\beta}} + \OO\big( (1+\tfrac{1}{\beta})C^2 |x|^{1+\frac{2}{\beta}} \big)
\end{equation}
in a neighborhood of zero. Notice that the similarity profile agrees with the initial data chosen in \S~\ref{sec:example} near 0 if we drop the highest order term in this expansion.


\section{The result}

\subsection{Local well-posedness}
\label{sec:LWP}

It follows from the standard well-posedness theory of the Euler equations \eqref{eq:euler:2} that  if $s> \frac{3}{2}$ and  $w_0,z_0,k_0 \in H^s(\TT)$ then there exists a maximal time of existence $T_*  \in (0, +\infty]$, such that the following hold
\begin{itemize}
\item the equations \eqref{eq:euler:RV}  have a unique classical solution $(w,z,k) \in C^1(\TT \times [0,T_*); \RR^3)$ with initial data $(w_0, z_0, k_0)$;
\item the unique solution $(w,z,k)$ also satisfies
\begin{equation}
\label{lwp}
 (w,z,k) \in C([0,T_*) ; H^s(\TT)^3) \cap C^1([0,T_*) ; H^{s-1}(\TT)^3).
 \end{equation}
\end{itemize}
Furthermore, if $T_* < \infty$, then the following 
{\bf Eulerian blowup criterion} holds:
\begin{equation}
\label{eq:LCC}
\int^{T_*}_0 
\Bigl(\|\p_y w(\cdot, t)\|_{L^\infty} + \|\p_y z(\cdot, t)\|_{L^\infty} + \|\p_y k(\cdot, t)\|_{L^\infty}\Bigr)    dt = +\infty 
\,.
\end{equation}

Throughout the remainder of the paper, the initial data $w_0,z_0,k_0$ will always be assumed to lie in $W^{2,q}(\TT)$ for some $1< q \leq \infty$. In particular, the classical $H^s$ local-existence theory applies for a suitable value of $s>\frac 32$. To see this, recall that if $n \in \mathbb{Z}, n \geq 2$, and $1<q \leq \infty$, then
$$ W^{n,q}(\TT) \subset H^s(\TT) $$
with continuous embedding for
$$ s = s(n,q) = \begin{cases} n- (\tfrac{1}{q}-\tfrac{1}{2}) \hspace{18mm} p \leq 2 \\ n \hspace{35mm} p \geq 2 \end{cases}. $$
For all $n \geq 2$, $1< q \leq \infty$, we have $s > \frac{3}{2}$. So if $w_0,z_0, k_0 \in W^{2,q}(\TT)$ for some $1< q \leq \infty$, then $w_0,z_0,k_0 \in H^s(\TT)$ for some $s > \frac{3}{2}$ and our local well-posedness theory applies to the initial data $w_0,z_0, k_0$. As such, the {\it{blowup time}} $T_* \in (0,+\infty]$ will be the time at which the corresponding solution can no longer be continued as a solution in $C_tH^s_x \cap C^1_t H^{s-1}_x$ for any $s > \frac{3}{2}$.

\subsection{Assumptions on the initial data}
\label{sec:ID}

Let $\beta > 0$, and let $\bar{w}_0$ be a $2\pi$-periodic function such that
\begin{itemize}
\item $\bar{w}_0(x) = -x + \frac{\beta}{\beta+1} x|x|^{\frac{1}{\beta}}$ for $|x| \leq 1$, 
\item $|\bar{w}_0(x)| < 1$ for all $x \in \TT$,
\item $0 \leq \bar{w}'_0(x) \leq 1$ for $1\leq |x| \leq \pi$.
\end{itemize}
For our theorem we take $\beta \geq 1$, and the initial data $(w_0,z_0,k_0)$ will be such that $w_0$ is a small perturbation of $1+\bar{w}_0$ in $C^{1, \frac{1}{\beta}}$, $z_0$ is a small perturbation of $0$ in $W^{2,p}$, and $k_0$ is a small perturbation of a constant state in $W^{2,p}$, where $p = \frac{\beta}{\beta-1}$ is the H\"{o}lder conjugate of $\beta$.

We note that when $\beta > 1$ and $p= \frac{\beta}{\beta-1}$, $\bar{w}_0 \in C^{1,\frac{1}{\beta}}(\TT) \setminus W^{2,p}(\TT)$. Since $W^{2,p}(\TT)$ is not dense in $C^{1,\frac{1}{\beta}}$ for $\beta > 1$, small enough perturbations of $\bar{w}_0$ in $C^{1,\frac{1}{\beta}}$ will not be in $W^{2,p}$, so $w_0$ will not be in $W^{2,p}$. When $\beta = 1$ however, $C^{1,\frac{1}{\beta}} = C^{1,1} = W^{2,\infty}$, and we will be able to assume that $(w_0,z_0,k_0)$ lie in an open set in $W^{2,\infty}(\TT)$. However, $w_0$ will still not be $C^2$, because in order for $w_0$ to be sufficiently close to $1+\bar{w}_0$ in $W^{2,\infty}$, $w''_0$ it will have to have a jump discontinuity at 0.

\subsection{Notation}

At the beginning of most sections we will introduce a size parameter $\eps > 0$. Throughout the paper, the hypothesis that $\eps > 0$ was chosen to be sufficiently small will be implicit in the statements of all of our lemmas and propositions. How small $\eps > 0$ will need to be in order for our results to hold will depend on our choice of $\gamma$ (hence $\alpha$) and on $1-\|\bar{w}_0\|_{L^\infty}$, but doesn't depend on anything else. 

In what follows, we will often identify functions on the torus $\TT$ with $2\pi$-periodic functions on $\RR$ when the notation is most convenient. This identification will allow us to write expressions like ``$|x| \leq 1$". We will always use $x \in \TT$ to denote the Lagrangian label and $y \in \TT$ to denote the Eulerian variable.

We will write $a \lesssim b$ to indicate that $a \leq Cb$, where the constant $C$ is independent of $\eps$, the spatial variables $x,y$, time $t$,  and the exponent $p \in (1, \infty]$ (see \S ~\ref{sec:EE}). The constant $C$ can, however, be dependent on $\gamma$, and therefore also on $\alpha$, and it can depend on $1-\|\bar{w}_0\|_{L^\infty}$ (see \S~\ref{sec:IE}). Furthermore, $C$ will always be independent of our choice of initial data $(w_0,z_0,k_0)$ satisfying the hypotheses specified at the beginning of each section, and it will be independent of our specific choice of $\bar{w}_0$ satisfying the hypotheses listed in \S~\ref{sec:ID}. We will write $a \sim b$ to mean that $a \lesssim b \lesssim a$, and we will further adopt the notation $f = \OO(g)$ to mean that $|f| \lesssim |g|$.


Often below we will have functions $f$ defined on $\TT \times [0,T_*)$ and maps $\Psi: \TT \times [0,T_*) \rightarrow \TT$, and we will use the notation
$$ f\cir \Psi (x,t) : = f(\Psi(x,t), t). $$
We will write $\Psi^{-1}$ to denote the function such that $\Psi^{-1} \cir \Psi (x,t) = \Psi \cir \Psi^{-1} (x,t) = x$ for all $t$ when such an inverse exists. 
For functions $f$ defined on $\TT \times[0,T_*)$ we will write
$$ [f]_{C^{0,\delta}} : = \sup_{\substack{y,y' \in \TT \\ y \neq y'}} \frac{|f(y,t)-f(y',t)|}{|y-y'|^\delta} $$
so that $[\cdot ]_{C^{0,\delta}}$ only corresponds to H\"{o}lder continuity in the spacial variable and $[f ]_{C^{0,\delta}}$ is always implicitly a function of $t$ if $f$ varies in $t$.

\subsection{Statement of the theorem}

\begin{theorem}[Main Result] 
\label{thm:main}
Let $\alpha >0$, $\beta \geq 1$, let $p = \frac{\beta}{\beta-1}\in (1,\infty]$ be the H\"{o}lder conjugate of $\beta$, and let $\bar{w}_0$ be defined as in \S~\ref{sec:ID} for this choice of $\beta$. There exists a sufficiently small $\eps_0 \in (0,1]$, which only depends on $\alpha$ and on $\| \bar w_0\|_{L^\infty}$, such that for any $0< \eps \leq \eps_0$ the following holds. 

Let $w_0, z_0, k_0$ be functions on $\TT$ satisfying
\begin{itemize}
\item $\|w_0 - (1+\bar{w}_0)\|_{C^{1,\frac{1}{\beta}}} < \eps$,
\item $\|z_0\|_{W^{2,p}} < \eps$,
\item $\|k_0-\fint_\TT k_0\|_{W^{2,p}} < \eps$,
\item $w_0 \in W^{2,q}(\TT)$ for some $1< q < \infty$.
\end{itemize}
The maximal time of existence $T_*$ for the solution $(w,z,k)$ of \eqref{eq:euler:RV} is
\begin{equation}
T_* = \tfrac{2}{1+\alpha} + \OO(\eps).
\end{equation}
For such solutions, $w\sim 1$ on $\TT \times [0,T_*]$, and $z,k, \p_y z,$ and $\p_y k$ remain $\OO(\eps)$ on $\TT\times [0,T_*]$ while $\p_y w $ diverges to $-\infty$ as $t \rightarrow T_*$. At the time of blowup, there exists a unique point $y_* \in \TT$ where $w(\cdot, T_*)$ is $C^1$ away from$y_*$ but forms a preshock at $(y_*,T_*)$ with a profile given by
\begin{equation}
\label{eq:thm:expansion:1}
w(y,T_*) = w(y_*, T_*) - \big( (1+\tfrac{1}{\beta})^{\frac{\beta}{\beta+1}} + \OO(\eps)\big) \sgn(y-y_*) |y-y_*|^{\frac{\beta}{\beta+1}} + (1+\OO(\eps)) (y-y_*)
\end{equation}
for $|y-y_*| < \frac{\beta}{\beta+1} + \OO(\eps)$. Additionally, we know that $y_* = 1+\OO(\eps)$.

The variables $z$ and $k$ remain in $C^{1,\frac{1}{\beta+1}}(\TT)$ uniformly up to time $T_*$ with
\begin{equation}
\label{eq:thm:z:k}
\| z(\cdot, t) \|_{C^{1,\frac{1}{\beta+1}}} \lesssim \eps, \qquad \| k(\cdot, t) \|_{C^{1,\frac{1}{\beta+1}}} \lesssim \eps.
\end{equation}

For any fixed $\gamma> 1$, the implicit constants in all of the above statements are independent of our choice of $\eps$ sufficiently small and are independent of our choice of $(w_0,z_0,k_0)$ satisfying our above hypotheses. The implicit constants do depend, however, on our choice of $\gamma >1$.
\end{theorem}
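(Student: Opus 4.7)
The plan is to follow the three-step strategy advertised in \S~\ref{sec:ideas}: first, recast the differentiated Euler--Riemann system along the fast acoustic flow $\eta$; second, close $L^p$ energy estimates with $p = \tfrac{\beta}{\beta-1}$, uniformly in $p$ and $t$, for the subdominant variables $z$ and $k$ up to two derivatives; and third, transfer the resulting Lagrangian control back to the Eulerian picture via the Burgers-type inversion argument of Proposition~\ref{prop:burgers}.

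I would begin by introducing Lagrangian unknowns $W = w \circ \eta$, $Z = z \circ \eta$, $K = k\circ \eta$, where $\eta$ is the flow of $\lambda_3 = \tfrac{1+\alpha}{2}w + \tfrac{1-\alpha}{2}z$. Computing the transport equations for $(W,Z,K)$ along $\eta$ from \eqref{eq:euler:RV} shows that the right-hand side of the equation for $\p_t W$ is $\OO(\eps)$, since it is proportional to $\p_y k$; the equations for $Z$ and $K$ pick up extra advection-type terms of size $\sigma (\p_y z)\circ\eta$ and $\sigma (\p_y k)\circ\eta$, also $\OO(\eps)$. Consequently $W(x,t) = w_0(x) + \OO(\eps t)$ and
\begin{equation*}
\eta(x,t) = x + \tfrac{1+\alpha}{2} t\, w_0(x) + \OO(\eps t), \qquad \eta_x(x,t) = 1 + \tfrac{1+\alpha}{2} t\, w_0'(x) + \OO(\eps t),
\end{equation*}
so that $\eta_x$ first vanishes at a unique label $x_* = \OO(\eps)$ at time $T_* = \tfrac{2}{1+\alpha}+\OO(\eps)$, identifying both the blowup time and the pre-shock location $y_* = \eta(x_*, T_*) = 1 + \OO(\eps)$.

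The analytical core is to obtain $L^p$-based energy estimates for $\p_y z, \p_y k, \p_y^2 z, \p_y^2 k$, which satisfy transport-type equations along $\eta$ after differentiating \eqref{eq:euler:RV}. Multiplying each equation by $|\cdot|^{p-2}(\cdot)$ and integrating in $x$, the commutator terms $(\lambda_3 - \lambda_1)\p_y^{\ell} z = 2\alpha\sigma\, \p_y^{\ell} z$ (and the analogous terms for $k$) appear with a favorable sign after using the ideal gas identities. The resulting closed Gronwall inequality has constants depending only on $\alpha$ and $\|\bar w_0\|_{L^\infty}$, and not on $p$, so sending $p\to\infty$ recovers $\|\p_y z\|_{L^\infty}, \|\p_y k\|_{L^\infty} \lesssim \eps$ while preserving $\|\p_y^2 z\|_{L^p}, \|\p_y^2 k\|_{L^p} \lesssim \eps$ at the endpoint $p = \tfrac{\beta}{\beta-1}$. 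Combined with the Lagrangian identity $\p_y w = W_x/\eta_x$, the Eulerian blowup criterion \eqref{eq:LCC} then rules out any prior singularity and localizes the first one to $(y_*, T_*)$.

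With these bounds in hand, \eqref{eq:thm:expansion:1} follows by running the inversion argument of \S~\ref{sec:stable:burgers} on the identity $y = \eta(x,T_*)$, treating the $\OO(\eps)$ corrections to $W$ and $\eta$ as the perturbation $\mathcal E$ in Proposition~\ref{prop:burgers}; the hypothesis $w_0 \in C^{1,1/\beta}$ is exactly what is needed so that $\eta(\cdot,T_*)$ is monotone away from $x_*$ with $y-y_* \sim (x-x_*)|x-x_*|^{1/\beta}$. For \eqref{eq:thm:z:k}, composing the Lagrangian $W^{2,p}$ bounds on $Z,K$ with $\eta^{-1}$ and using that $\eta_x \sim |y-y_*|^{1/(\beta+1)}$ near $y_*$ converts $L^p$-control of second derivatives into $C^{0,1/(\beta+1)}$ control of $\p_y z, \p_y k$. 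The hardest step is the $L^p$ energy estimate itself: the equations for $\p_y^2 z$ and $\p_y^2 k$ contain terms of the form $\sigma\,\p_y^2 w$ which are never in $L^\infty$ because $w$ is only $C^{1,1/\beta}$, and keeping the constants uniform in $p$ (so as to pass to $p=\infty$) is what forces the entire framework to be built along the fast acoustic flow $\eta$ rather than the natural $z$- and $k$-characteristics, so that these singular Eulerian quantities can be absorbed by $L^p$-duality against the correct Jacobian weight.
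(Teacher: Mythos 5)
Your high-level skeleton — Lagrangian variables along the fast acoustic flow $\eta$, $L^p$ energy estimates uniform in $p$, Burgers-type inversion — correctly mirrors the paper's strategy. But the description of the energy estimate, which is the crux of the argument, contains a genuine gap.

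You propose to run the $L^p$ estimate on $\p_y^2 z$ and $\p_y^2 k$ and assert that the resulting $\sigma\,\p_y^2 w$ source terms can be ``absorbed by $L^p$-duality against the correct Jacobian weight.'' That resolution does not exist and would not work: $\p_y^2 w$ has no reason to lie in any $L^p_x$ at positive times (it is becoming a measure), and there is no weighted duality against $\eta_x$ that tames it. The paper's mechanism is structural: the offending term is arranged to never appear. The quantity estimated is not $Z_{xx}$ but $\Z_t$, the $t$-derivative along $\eta$ of the differentiated Riemann variable $q^z := \p_y z + \tfrac{1}{2\gamma}\sigma\p_y k$, with the weighted energy
\[
E_p(t) = \int_\TT \Sigma^{-bp}\,\eta_x\,|\Z_t|^p\,dx, \qquad b = \tfrac{2}{\alpha}+\tfrac12.
\]
The key identity \eqref{eq:important:2} shows $\eta_x\Z_{tt} = 2\alpha\Sigma\Z_{xt} + \eta_x\Z_t(-\W + \OO(\eps)) + (\text{good})$: $w$ enters only through $\W = q^w\circ\eta$, which stays bounded (indeed $\eta_x\W = \bar w_0' + \OO(\eps)$), and no $\p_y^2 w$ appears. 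Likewise $\p_y^2 k$ is not estimated by energy at all: since $k$ is transported along the $2$-characteristic $\phi$, identity \eqref{eq:k_yy} writes it as a bounded multiple of $k_0''\circ\phi^{-1}$, which enters the Gronwall inequality only as an $L^p$ data term. The weight $\Sigma^{-bp}$ is a sound-speed weight, not a Jacobian, chosen precisely so that the pointwise inequality \eqref{ineq:q^w}, $\eta_x\W \le -\tfrac12 + 4\eta_x$, produces a $p$-uniform damping term. Without this specific choice of unknown and weight, the Gronwall constant grows with $p$ and the $p\to\infty$ limit is lost.

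A secondary, but not cosmetic, issue: you locate the blowup label as $x_*=\OO(\eps)$, whereas the paper shows $x_*=0$ exactly, and the sharp constant $(1+\tfrac1\beta)^{\beta/(\beta+1)}$ in \eqref{eq:thm:expansion:1} depends on this. Your pointwise bound $W = w_0 + \OO(\eps t)$ is not enough; one needs the Hölder estimate $[W_x - w_0']_{C^{0,1/\beta}} = \OO(\eps)$ (a consequence of the $L^p$ estimate, via \S~\ref{sec:approx}), which yields $\eta_x(x,t)-\eta_x(0,t) = (\tfrac{1+\alpha}{2}+\OO(\eps))t|x|^{1/\beta}$ and hence pins the unique zero of $\eta_x(\cdot,T_*)$ at $x=0$ with the advertised cusp coefficient.
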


\subsection{Outline of the proof}

We show that the classical solution $(w,z,k)$ of \eqref{eq:euler:RV} with initial data satisfying some of our hypotheses only exists for a finite time $T_*$, and we will  characterize $T_*$ as the time when the flow $\eta$ of the fastest wave speed $\lambda_3$ stops being a diffeomorphism because $\eta_x(\cdot, T_*)$ has a zero. For solutions with appropriate initial data, we will prove that $w,z,k, \p_y z,$ and $\p_y k$ are all bounded on $\TT \times [0,T_*]$ while $\p_y w$ diverges to $-\infty$ as $t$ approaches $T_*$. The key technical step of the paper will be obtaining uniform $L^p$ energy estimates for the functions $w\circ \eta, z\circ \eta, k\circ \eta$, together with their derivatives of order $\leq 2$. Using these $L^p$ energy estimates, we will  show that the unique label $x_* \in \TT$ for which $\eta_x(x_*,T_*) = 0$ is $x_* = 0$. Lastly, we invert the map $x \rightarrow \eta(x,T_*)$ for $x$ near 0 and arrive at the shock profile described in Theorem \ref{thm:main}.

\section{Key identities}
\label{sec:keyid}

In this section, $(w,z,k)$ will be the solution of \eqref{eq:euler:RV} on $\TT \times [0,T_*)$ for given initial data $(w_0,z_0,k_0) \in H^s(\TT)$ for some $s > \frac{3}{2}$.

In general, if $\varphi \in \R$ and $\lambda : = u + \varphi \sigma = \tfrac{1+\varphi}{2} w + \tfrac{1-\varphi}{2} z$, then
\begin{equation}
\p_t \sigma + \lambda \p_y \sigma = - \sigma\big( \tfrac{\alpha-\varphi}{2} \p_y w  + \tfrac{\alpha+\varphi}{2} \p_y z \big).
\end{equation}
Setting $\varphi = -\alpha, 0,$ and $\alpha$ we get
\begin{align}
\label{eq:c}
\sigma \cir\psi & = \sigma _0 e^{-\alpha \int^t_0 \p_y w\circ \psi}, \notag \\
\sigma \cir \phi & = \sigma_0 e^{-\alpha \int^t_0 \p_y \frac{w+z}{2}\circ \phi}, \notag \\
\sigma \cir \eta & = \sigma_0e^{-\alpha \int^t_0 \p_y z\circ \eta}.
\end{align}
Since
\begin{align}
\psi_x & = e^{\int^t_0 \p_y \lambda_1 \circ \psi}, & \phi_x & = e^{\int^t_0 \frac{1}{2}(\p_yw + \p_y z) \circ \phi}, & \eta_x & = e^{\int^t_0 \p_y \lambda_3 \circ \eta} 
\end{align}
It follows that
\begin{align}
\label{eq:psi_x}
\psi_x & = (\tfrac{\sigma_0}{\sigma\circ \psi})^{\frac{1-\alpha}{2\alpha}} e^{\frac{1+\alpha}{2} \int^t_0 \p_y z\circ \psi}, \\
\label{eq:phi_x}
\phi_x & = \big( \tfrac{\sigma_0}{\sigma\circ \phi} \big)^{\frac{1}{\alpha}}, \\
\label{eq:eta_x}
\eta_x & = \big( \tfrac{\sigma_0}{\sigma\circ \eta} \big)^{\frac{1-\alpha}{2\alpha}} e^{\frac{1+\alpha}{2} \int^t_0 \p_y w\circ \eta}.
\end{align}

\begin{align}
\p_t(k \cir \psi) & = - \alpha (\sigma\p_y k )\cir\psi, \\
\label{eq:k_t:2}
\p_t(k\cir \phi) & = 0, \\
\p_t(k \cir \eta) & = \alpha(\sigma \p_y k)\cir \eta.
\end{align}

It follows from \eqref{eq:phi_x} and \eqref{eq:k_t:2} that
\begin{align}
\label{eq:k_y}
\p_y k \cir \phi & = \sigma^{-\frac{1}{\alpha}}_0 k'_0 \sigma^{\frac{1}{\alpha}} \cir \phi.
\end{align}

Define
\begin{align}
\label{def:q^w}
q^w := \p_y w - \tfrac{1}{2\gamma} \sigma\p_y k, 
\qquad 
\mbox{and}
\qquad 
q^z := \p_y z + \tfrac{1}{2\gamma} \sigma\p_y k.
\end{align}

Taking $\p_t$ of $\eta_x q^w\cir \eta$ and $\psi_x q^z\cir\psi$ and then integrating gives us the Duhamel formulas
\begin{align}
\label{eq:q^w}
\eta_x q^w\cir \eta & = e^{\frac{1}{4\gamma} k\circ \eta} \bigg[ (w'_0 - \tfrac{1}{2\gamma} \sigma_0 k'_0)e^{-\frac{1}{4\gamma} k_0} + \tfrac{\alpha}{4\gamma} \int^t_0 \! \! e^{-\frac{1}{4\gamma} k \circ \eta} \eta_x (\sigma \p_y k q^z) \cir \eta \: ds \bigg], \\
\label{eq:q^z}
\psi_x q^z\cir \psi & = e^{\frac{1}{4\gamma} k\circ \psi} \bigg[ (z'_0 + \tfrac{1}{2\gamma} \sigma_0 k'_0)e^{-\frac{1}{4\gamma} k_0} - \tfrac{\alpha}{4\gamma} \int^t_0 \! \! e^{-\frac{1}{4\gamma} k \circ \psi} \psi_x (\sigma \p_y k q^w) \cir \psi \: ds \bigg].
\end{align}


\section{Initial Estimates}
\label{sec:IE}

In this section, $\beta \geq 1$, $\bar{w}_0$ will be a function satisfying the hypotheses described in \S~\ref{sec:ID}, and $(w,z,k)$ will be the solution of \eqref{eq:euler:RV} on $\TT \times [0,T_*)$ for given initial data $(w_0,z_0,k_0)$ satisfying
\begin{itemize}
\item $\|w_0 - (1+\bar{w}_0) \|_{C^1} < \eps$,
\item $\|z_0\|_{C^1} < \eps$,
\item $\|k_0 - \fint_\TT k_0 \|_{C^1} < \eps$,
\item $w_0,z_0, k_0 \in W^{2,q}(\TT)$ for some $1< q \leq \infty$.
\end{itemize}
The last assumption is simply there so that we can apply our local well-posedness theory and Lipschitz continuation criterion described in \S~\ref{sec:LWP}. All implicit constants will be independent of our choice of $(w_0,z_0,k_0)$ satisfying these constraints and independent of our choice of $\beta \geq 1$, but will depend on our choice of $\alpha > 0$.

\begin{lemma}
\label{lem:1:1}
Suppose that $T \in [0, \frac{4}{1+\alpha} \wedge T_*]$ and that for all $(y,t) \in \TT \times [0,T]$ we have
\begin{itemize}
\item $w \sim 1$,
\item $ |k|, |\p_y k|, |z|, |\p_y z| \lesssim \varepsilon$.
\end{itemize}
Then if $\varphi < \alpha$ and $\Psi$ is defined to be the flow of $\lambda : = u + \varphi \sigma$ we have
\begin{itemize}
\item $\eta_x \leq 3+\alpha$,
\item $ \eta_x q^w\cir \eta = \bar{w}'_0 + \mathcal O(\varepsilon)$,
\item $ \int^t_0 |\p_y w\cir \Psi(x,s)| \: ds \lesssim \tfrac{t}{\alpha-\varphi}$,
\end{itemize}
for all $(x,t) \in \TT \times [0,T]$. Note that the implicit constants in our conclusions are independent of our choice of $\varphi < \alpha$ or $T \in [0, \frac{4}{1+\alpha} \wedge T_*]$ and depend only on the initial data and the implicit constants in our hypotheses.
\end{lemma}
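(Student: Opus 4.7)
The plan is to establish all three bullets via a bootstrap on $\eta_x$, using the Duhamel formula \eqref{eq:q^w} for $\eta_x q^w \cir \eta$ as the driving mechanism. The hypotheses immediately give $\sigma = (w-z)/2 \sim 1$ on $\TT \times [0, T]$ (and $\sigma_0 \sim 1$ on $\TT$), so $q^z = \p_y z + \tfrac{1}{2\gamma}\sigma \p_y k = O(\eps)$, and the initial data assumption $\|w_0 - (1 + \bar w_0)\|_{C^1} < \eps$ gives $w_0' = \bar w_0' + O(\eps)$.

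To prove bullets (i) and (ii), I will run a bootstrap: assume $\eta_x \leq 4$ on $[0, T']$ for the maximal such $T' \leq T$, and improve this to $\eta_x \leq 3 + \alpha$ with $T' = T$. Feeding the bootstrap hypothesis into \eqref{eq:q^w}, the exponentials $e^{\pm k\cir\eta/(4\gamma)}$ are $1 + O(\eps)$ since $|k| \lesssim \eps$; the initial-data piece simplifies to $\bar w_0' + O(\eps)$; and the forcing integrand $\eta_x (\sigma \p_y k\, q^z)\cir \eta$ is pointwise $O(\eps^2)$, contributing $O(\eps^2)$ to the integral over $t \leq 4/(1+\alpha)$. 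This establishes bullet (ii). Next, using $\p_y w = q^w + \tfrac{\sigma \p_y k}{2\gamma}$ and $\p_y z = q^z - \tfrac{\sigma \p_y k}{2\gamma}$ one finds $\p_y \lambda_3 = \tfrac{1+\alpha}{2}q^w + \tfrac{1-\alpha}{2}q^z + \tfrac{\alpha}{2\gamma}\sigma\p_y k$, so $\p_t \eta_x = \eta_x (\p_y \lambda_3)\cir \eta = \tfrac{1+\alpha}{2}(\eta_x q^w \cir \eta) + O(\eps)\eta_x$. Substituting bullet (ii) together with $|\bar w_0'| \leq 1$, integrating from $0$ to $t \leq 4/(1+\alpha)$, and invoking Gronwall for $\eps$ small yields $\eta_x \leq 1 + 2 + O(\eps) \leq 3 + \alpha$, closing the bootstrap and proving bullet (i).

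For bullet (iii), the key identity is the $\sigma$-evolution along $\Psi$:
\[
\p_s \log(\sigma \cir \Psi) = -\tfrac{\alpha - \varphi}{2} (\p_y w) \cir \Psi - \tfrac{\alpha + \varphi}{2} (\p_y z) \cir \Psi,
\]
which, solved for $\p_y w \cir \Psi$, bounds the integrand in terms of $|\p_s \log(\sigma \cir \Psi)|$ and $|\p_y z \cir \Psi|$. The $\p_y z$-contribution is immediately $O(\eps t / (\alpha - \varphi))$. For the main term, I plan to use the near-constancy $\sigma \cir \eta = \sigma_0 e^{-\alpha \int \p_y z \cir \eta} = \sigma_0 + O(\eps)$ from \eqref{eq:c} to rewrite $\sigma \cir \Psi(x, s) = \sigma_0(\tilde x(s)) + O(\eps)$, where $\tilde x(s) := \eta^{-1}(\Psi(x, s), s)$ is monotone in $s$ via the relation $\tilde x'(s) = -(\alpha - \varphi)(\sigma \cir \Psi) / \eta_x(\tilde x(s), s)$. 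A change of variables then reduces $\int_0^t |\p_s \log(\sigma \cir \Psi)|\, ds$ to a monotone Lagrangian integral of $|\sigma_0'/\sigma_0|$, to be controlled by $\lesssim (\alpha - \varphi) t$ using the structure of $\bar w_0'$ and the $\eta_x \leq 3+\alpha$ bound from bullet (i).

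The most delicate step is bullet (iii): merely bounding $\sigma \cir \Psi$ pointwise yields only $\bigl|\int_0^t \p_y w \cir \Psi\, ds\bigr| \lesssim 1/(\alpha - \varphi)$, while the stated $\int_0^t |\p_y w \cir \Psi|\, ds \lesssim t/(\alpha - \varphi)$ is strictly stronger and requires careful geometric accounting of the curve $\tilde x(s)$, especially as it transits the near-blowup region around the label $x = 0$ where $\bar w_0'(0) = -1$. Managing this transit efficiently — using that $\tilde x$ moves rapidly precisely when $\eta_x$ is small, so the short time spent in the singular region compensates for the large pointwise size of $\p_y w$ — will be the principal technical obstacle.
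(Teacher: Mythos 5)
Your treatment of bullets (i) and (ii), via the Duhamel formula \eqref{eq:q^w} and a bootstrap on $\eta_x$, matches the paper's argument and is sound. For bullet (iii), however, the proposed route through the $\sigma$-equation has a gap.

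You plan to bound $\int_0^t |\p_s\log(\sigma\circ\Psi)|\,ds$ by ``changing variables'' along the monotone curve $\tilde x(s) = \eta^{-1}(\Psi(x,s),s)$, using the near-constancy $\sigma\circ\Psi(x,s) = \sigma_0(\tilde x(s))\bigl(1 + O(\eps)\bigr)$ coming from \eqref{eq:c}. But that near-constancy holds only pointwise; it does not upgrade to a statement about $s$-derivatives. Indeed, writing $\log\bigl(\sigma\circ\Psi\bigr)(x,s) = \log\sigma_0(\tilde x(s)) - \alpha\int_0^s (\p_y z\circ\eta)(\tilde x(s),\tau)\,d\tau$ and differentiating in $s$ produces, beyond the harmless $\tilde x'(s)\,\tfrac{\sigma_0'}{\sigma_0}(\tilde x(s))$ term, a contribution
\[
-\alpha\,\tilde x'(s)\int_0^s \p_x\bigl[(\p_y z\circ\eta)\bigr](\tilde x(s),\tau)\,d\tau
= -\alpha\,\tilde x'(s)\int_0^s (\p^2_y z\circ\eta)\,\eta_x\,(\tilde x(s),\tau)\,d\tau,
\]
which involves $\p^2_y z$. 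The hypotheses of the lemma control only $|z|$ and $|\p_y z|$, so this term is genuinely uncontrolled at this stage of the paper (the $W^{2,p}$ estimates on $z$ come later, in \S\ref{sec:EE}, and they themselves rely on this lemma). So the change of variables as you describe it is not available.

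Your closing paragraph correctly identifies the compensation mechanism --- the label curve $\tilde x$ transits fastest precisely where $\p_y w\circ\Psi$ is largest, since both scale like $1/\eta_x$ --- but the clean way to exploit it is to feed bullet (ii) directly into bullet (iii), which is what the paper does and which bypasses the $\sigma$-equation entirely. From (ii), $q^w\circ\eta = (\bar w_0' + O(\eps))/\eta_x$, so $q^w\circ\Psi(x,t) = \bigl(\bar w_0'(\tilde x(t)) + O(\eps)\bigr)/\eta_x(\tilde x(t),t)$, while the ODE for $\tilde x$ reads $\tilde x'(t) = -(\alpha-\varphi)\tfrac{\sigma\circ\Psi(x,t)}{\eta_x(\tilde x(t),t)}$. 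Substituting $1/\eta_x(\tilde x,t) = -\tilde x'(t)/\bigl((\alpha-\varphi)\sigma\circ\Psi\bigr)$ and using $|\bar w_0'| \leq 1$, $\sigma\circ\Psi \sim 1$, one gets $|q^w\circ\Psi| \lesssim -\tfrac{1}{\alpha-\varphi}\tilde x'(t)$, and since $\tilde x' < 0$,
\[
\int_0^t |q^w\circ\Psi|\,ds \lesssim \tfrac{x - \tilde x(t)}{\alpha-\varphi} \lesssim \tfrac{t}{\alpha-\varphi},
\]
the last step because $x - \tilde x(t) = \int_0^t\bigl(\lambda_3\circ\eta(\tilde x(t),\tau) - \lambda\circ\Psi(x,\tau)\bigr)d\tau$ with bounded integrand. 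Bullet (iii) then follows from $\p_y w = q^w + \tfrac{1}{2\gamma}\sigma\p_y k$ and $\p_y k = O(\eps)$. This route uses only $\p_y z$, never $\p^2_y z$, and makes essential use of step (ii), which your proposal does not invoke for step (iii).
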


\begin{proof}[Proof of Lemma ~\ref{lem:1:1}]
Fix $\varphi <\alpha$ and define
$$ g(x,t) : = \eta^{-1}(\Psi(x,t),t). $$
We compute that
\begin{align}
\label{g_t}
\p_t g(x,t) & = \p_t \eta^{-1}(\Psi(x,t),t) + \p_t \eta^{-1}( \Psi(x,t),t) \Psi_t(x,t) \notag \\
& = \frac{ - \eta_t(g(x,t),t) + \Psi_t(x,t)}{ \eta_x(g(x,t),t)} \notag \\
& = \frac{-\lambda_3 \cir \eta( (g(x,t),t) + \lambda \cir \Psi(x,t)}{ \eta_x(g(x,t),t)} \notag \\
& = \bigg( \frac{-\lambda_3 \cir \eta + \lambda \cir\eta}{\eta_x} \bigg) ( g(x,t), t) \notag \\
& = - (\alpha-\varphi) \frac{\sigma\cir\eta}{\eta_x} ( g(x,t),t).
\end{align}
Note that $\p_t g(x,t) < 0$ everywhere. We also know that
\begin{align*}
 \Psi(x,t) & = x + \int^t_0 \! \! \lambda \cir \Psi(x,\tau) \: d\tau, \hspace{5mm} \text{ and } \\
\Psi(x,t) & = \eta(g(x,t),t) = g(x,t) + \int^t_{0} \! \! \lambda_3 \cir \eta (g(x,t),\tau) \: d\tau, \\ 
\text{so } \hspace{4mm} x-g(x,t) & = \int^t_{0} \! \! \lambda_3 \cir \eta(g(x,t), \tau) - \lambda \cir\Psi(x,\tau) \: d\tau.
\end{align*}

Using our hypotheses, along with \eqref{eq:q^w}, we conclude that that for all $(x,t) \in \mathbb{T} \times [-\varepsilon, T]$, we have
\begin{align*}
\sup_{[-\varepsilon, t]} |\eta_x q^w\cir\eta| & \leq 1+ \OO(\eps) +  \OO(\eps) \sup_{[-\varepsilon, t]} \eta_x.
\end{align*}
Since
\begin{align*}
\eta_x & = 1 + \int^t_0 \! \! \eta_x \p_y \lambda_3\cir \eta \: ds \\
& = 1 + \int^t_0 \! \! \eta_x(\tfrac{1+\alpha}{2}q^w\cir \eta + \OO(\eps) ) \: ds,
\end{align*}
it follows that
\begin{align*}
\sup_{[0,t]} \eta_x & \leq 1 + \tfrac{1+\alpha}{2} t + \OO(\eps t) + \OO(\eps t) \sup_{[0,t]} \eta_x \\
\implies \sup_{[0,t]} \eta_x & \leq \frac{1 + \tfrac{1+\alpha}{2} t + \OO(\eps t)}{1-\OO(\eps t)} \leq 3+\alpha.
\end{align*}
The last inequality is true for $\varepsilon> 0$ taken to be small enough, since $t \leq \frac{4}{1+\alpha}$. Plugging this into  \eqref{eq:q^w} and letting $\varepsilon$ be sufficiently small gives us
\begin{align*}
 q^w\cir \eta & = \frac{\bar{w}'_0 + \OO(\eps)}{\eta_x}.
\end{align*}
It follows that
$$ q^w\cir \Psi(x,t) = -\tfrac{1}{\alpha-\varphi} \p_t g(x,t) \frac{\bar{w}'_0(g(x,t)) + \mathcal O(\eps)}{  \sigma\cir\Psi(x,t)}. $$
Since $\p_t g < 0$, it follows that
$$ |q^w\cir \Psi(x,t)| \lesssim -\tfrac{1}{\alpha-\varphi} \p_t g(x,t). $$
So
$$ \int^t_{0} \! \! |q^w\cir\Psi(x,\tau)| \: d\tau \lesssim \frac{x-g(x,t)}{ \alpha-\varphi} \lesssim \tfrac{t}{\alpha-\varphi} $$
Our result follows immediately from this inequality and our hypotheses.
\end{proof}

\begin{lemma}
\label{lem:1:2}
For all $(y,t) \in \TT \times [0,\frac{4}{1+\alpha} \wedge T_*]$ we have
\begin{itemize}
\item $\tfrac{1}{2} - \OO(\eps) \leq w \leq \tfrac{3}{2} + \OO(\eps)$,
\item $|z|, |k|, |\p_y k| \lesssim \eps$.
\end{itemize}
Further, for all $(x,t) \in \TT\times [0, \frac{4}{1+\alpha} \wedge T_*]$ we have $\phi_x \sim 1$.
\end{lemma}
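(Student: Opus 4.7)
The plan is a continuous bootstrap using Lemma~\ref{lem:1:1} as the driving input. Fix $C_0 \gg 1$, and let $T_{\rm bt}$ be the supremum of $T \in [0,\tfrac{4}{1+\alpha}\wedge T_*]$ on which the hypotheses of Lemma~\ref{lem:1:1} (i.e.\ $w\sim 1$ and $|k|,|\p_y k|,|z|,|\p_y z|\le C_0\eps$) hold on $\TT\times[0,T]$. Initial smallness from Section~\ref{sec:IE} together with continuity give $T_{\rm bt}>0$; it suffices to strictly improve every bootstrap bound on $[0,T_{\rm bt}]$, which forces $T_{\rm bt}=\tfrac{4}{1+\alpha}\wedge T_*$ and yields the stated conclusions of Lemma~\ref{lem:1:2}.

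On $[0,T_{\rm bt}]$, Lemma~\ref{lem:1:1} supplies: (a) $\eta_x\le 3+\alpha$, (b) $\eta_x q^w\cir \eta=\bar{w}_0'+\OO(\eps)$, and (c) $\int_0^t |\p_y w\cir \Psi(x,s)|\,ds\lesssim \tfrac{t}{\alpha-\varphi}$ for any $\varphi<\alpha$, where $\Psi$ is the flow of $u+\varphi\sigma$. Since $\sigma=\tfrac{1}{2}(w-z)$, the bootstrap also gives $\sigma,\sigma_0\sim 1$. The transport identities of Section~\ref{sec:keyid} then propagate the small-data bounds: \eqref{eq:k_t:2} gives $k\cir\phi\equiv k_0$, so $\|k(\cdot,t)-\fint_\TT k_0\|_{L^\infty}\lesssim \eps$; \eqref{eq:k_y} combined with $\sigma,\sigma_0\sim 1$ and $|k_0'|<\eps$ gives $|\p_y k|\lesssim \eps$; integrating $\p_t(z\cir \psi)=\tfrac{\alpha}{2\gamma}(\sigma^2 \p_y k)\cir \psi$ over $t\lesssim 1$ gives $|z\cir\psi-z_0|\lesssim \eps$, hence $|z|\lesssim \eps$; and the analogous integration of $\p_t(w\cir\eta)$ produces $|w\cir\eta-w_0|\lesssim \eps$, which together with the range of $1+\bar{w}_0$ from Section~\ref{sec:ID} yields the stated interval for $w$. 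The bound $\phi_x\sim 1$ is then immediate from \eqref{eq:phi_x}.

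The main obstacle is recovering $|\p_y z|\lesssim \eps$, which the bootstrap requires but which does not appear in Lemma~\ref{lem:1:2}'s stated conclusions; moreover a direct transport equation for $\p_y z$ would couple to $\p_y w$ pointwise, and the latter diverges at the shock. We instead invoke the Duhamel identity \eqref{eq:q^z} along $\psi$ (so $\varphi=-\alpha$, giving $\alpha-\varphi=2\alpha>0$). The boundary term is $\OO(\eps)$ using $|z_0'|,|k_0'|<\eps$, $\sigma_0\sim 1$, and boundedness of the entropy exponentials. For the time integral, $|\psi_x(\sigma\p_y k q^w)\cir \psi|\lesssim \eps\,|q^w\cir\psi|$ by the $\p_y k$ bound with $\psi_x,\sigma\sim 1$, while $|q^w|\le |\p_y w|+\tfrac{1}{2\gamma}|\sigma\p_y k|$ combined with conclusion (c) yields $\int_0^t |q^w\cir\psi|\,ds\lesssim 1$. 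Thus the full integral is $\OO(\eps)$, giving $|q^z|\lesssim \eps$ and hence $|\p_y z|\lesssim \eps$ via \eqref{def:q^w}. With $C_0$ chosen large up front, every bootstrap bound is strictly improved, closing the argument.
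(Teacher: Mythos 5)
Your proof is correct, but it takes a genuinely different route from the paper's. The paper proves Lemma~\ref{lem:1:2} without invoking Lemma~\ref{lem:1:1} at all and, crucially, without any bootstrap control on $\p_y z$ or $\p_y w$: it runs a \emph{coarse} $O(1)$-level bootstrap ($\tfrac14\le w\le 2$, $|z|,|k|,|\p_y k|\le 1$, $4^{-1/\alpha}\le\phi_x\le 4^{1/\alpha}$), uses $k\cir\phi=k_0$ and $\p_y k\cir\phi=\phi_x^{-1}k_0'$ to get $\p_y k=\OO(\eps)$, observes that the forcing term $\sigma^2\p_y k$ in \eqref{eq:euler:RV} is then $\OO(\eps)$, and integrates the transport equations for $w$ and $z$ along $\eta$ and $\psi$ over the $O(1)$ time interval to close. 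The $\p_y z$ bound you identify as the main obstacle is deliberately deferred to Lemma~\ref{lem:1:3}, where a separate $q^z$-bootstrap is run using the just-established Lemma~\ref{lem:1:2} together with Lemma~\ref{lem:1:1}. By contrast, you merge these two stages into a single larger bootstrap whose hypotheses already include $|\p_y z|\lesssim\eps$ (so that Lemma~\ref{lem:1:1} may be applied), and you recover that bound via the Duhamel identity \eqref{eq:q^z} exactly as the paper's Lemma~\ref{lem:1:3} does. This works, but be aware that the implicit constants in Lemma~\ref{lem:1:1} depend on the constant $C_0$ in your bootstrap hypothesis $|\p_y z|\le C_0\eps$; closing therefore requires noting that the $C_0$-dependence enters only through terms carrying an extra factor of $\eps$ (as the paper does explicitly in the proof of Lemma~\ref{lem:1:3} by first choosing $C$ large, then $\eps$ small). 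The paper's modular split is cleaner precisely because it decouples the elementary $O(1)$ estimates of Lemma~\ref{lem:1:2} (which never see $\p_y z$) from the Duhamel-type argument of Lemma~\ref{lem:1:3}; your merged version is heavier machinery than the stated conclusion of Lemma~\ref{lem:1:2} actually requires.
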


\begin{proof}
This follows from a simple bootstrap argument. Let $T \in [0, \frac{4}{1+\alpha} \wedge T_*]$ and assume that 
\begin{itemize}
\item $\tfrac{1}{4} \leq w \leq 2$ for all $(y,t) \in \TT \times [0,T]$,
\item $|z|, |k|, |\p_y k| \leq 1$ for all $(y,t) \in \TT \times [0,T]$,
\item $4^{-\tfrac{1}{\alpha}} \leq \phi_x \leq 4^{\tfrac{1}{\alpha}}$ for all $(x,t) \in \TT \times [0,T]$.
\end{itemize}
Then it follows from \eqref{eq:k_t:2} that $k\circ\phi = k_0 = \OO(\eps)$ and $\p_y k \circ \phi = \phi^{-1}_x k'_0 = \OO(\eps)$ for all $(x,t) \in \TT\times [0,T]$. Our bootstrap assumptions tell us that $|\sigma| \lesssim 1$ for $(y,t) \in \TT \times [0,T]$, so $\sigma^2 \p_yk = \OO(\eps)$ for $(y,t) \in \TT \times [0,T]$. It therefore follows from equations \eqref{eq:euler:2} that
\begin{align*}
w\cir \eta & = w_0 + \OO(\eps t) = 1+ \bar{w}_0 + \OO(\eps), \\
z\cir \psi & = z_0 + \OO(\eps t) = z_0 + \OO(\eps), \\
\text{ so } 
\tfrac{1}{2} - \OO(\eps) \leq &  w \leq \tfrac{3}{2} + \OO(\eps), \\
 \text{ and }  z  & = \OO(\eps).
\end{align*}
This implies that $\tfrac{1}{4} - \OO(\eps) \leq  \sigma \leq \tfrac{3}{4} + \OO(\eps)$ for $(y,t) \in \TT \times [0,T]$ and it therefore follows from \eqref{eq:phi_x} that
$$ (3+ \OO(\eps))^{-\tfrac{1}{\alpha}} \leq \phi_x \leq (3+\OO(\eps))^{\tfrac{1}{\alpha}}. $$
\end{proof}

\begin{lemma}
\label{lem:1:3}
For all $(x,t) \in \TT \times [0, \frac{4}{1+\alpha} \wedge T_*]$ we have
\begin{itemize}
\item $\eta_xq^w\circ \eta = \bar{w}_0' + \OO(\eps)$,
\item $\eta_x \leq 3+\alpha$,
\item $\int^t_0 |\p_y w\circ \psi| \lesssim 1$,
\item $ \int^t_0 |\p_y w\circ \phi| \lesssim 1$,
\item $|\p_y z\circ \psi| \lesssim \eps$,
\item $\psi_x \sim 1$.
\end{itemize}
\end{lemma}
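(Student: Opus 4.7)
The plan is to combine Lemma~\ref{lem:1:2} with Lemma~\ref{lem:1:1} via a bootstrap argument on $\|\p_y z(\cdot,t)\|_{L^\infty}$. Lemma~\ref{lem:1:2} already supplies $w\sim 1$ together with $|z|,|k|,|\p_y k|\lesssim\eps$ on the whole interval $[0,\tfrac{4}{1+\alpha}\wedge T_*]$, so the only hypothesis of Lemma~\ref{lem:1:1} that is still missing is the pointwise bound $|\p_y z|\lesssim\eps$. I would fix a large absolute constant $M>0$ (to be determined at the end) and let $T^\sharp\in(0,\tfrac{4}{1+\alpha}\wedge T_*]$ be the supremum of times $T$ for which $\|\p_y z(\cdot,s)\|_{L^\infty}\le M\eps$ holds for all $s\in[0,T]$; continuity of the solution together with the initial bound $\|z_0'\|_{L^\infty}<\eps$ ensures $T^\sharp>0$.

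On $[0,T^\sharp]$, all hypotheses of Lemma~\ref{lem:1:1} are met. Applying that lemma with $\varphi=-\alpha$ (so $\Psi=\psi$) and with $\varphi=0$ (so $\Psi=\phi$) immediately yields the first four claims: $\eta_x q^w\cir\eta=\bar w_0'+\OO(\eps)$, $\eta_x\le 3+\alpha$, and $\int_0^t|\p_y w\cir\psi|\,ds, \int_0^t|\p_y w\cir\phi|\,ds\lesssim 1$.

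To close the bootstrap I would exploit the Duhamel identity \eqref{eq:q^z} evaluated along $\psi$. The data term $(z_0'+\tfrac{1}{2\gamma}\sigma_0 k_0')e^{-k_0/(4\gamma)}$ is $\OO(\eps)$ by the assumptions on $(w_0,z_0,k_0)$. For the remaining integral term, I would first invoke \eqref{eq:psi_x} together with $\sigma\sim 1$ (from Lemma~\ref{lem:1:2}) and the bootstrap estimate $\int_0^t|\p_y z\cir\psi|\,ds\le M\eps t\lesssim M\eps$ to conclude $\psi_x\sim 1$ with constants independent of $M$; this also delivers the sixth conclusion. Then, combining $|\p_y k|\lesssim\eps$, $\sigma\sim 1$, $\psi_x\sim 1$, and the decomposition $q^w=\p_y w-\tfrac{1}{2\gamma}\sigma\p_y k$ with the already-established bound $\int_0^t|\p_y w\cir\psi|\,ds\lesssim 1$, the Duhamel integral is $\OO(\eps)$. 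Altogether $|\psi_x q^z\cir\psi|\lesssim\eps$, hence $|q^z\cir\psi|\lesssim\eps$ and therefore $|\p_y z|\lesssim\eps$ pointwise. Taking $M$ strictly larger than the absolute constant produced above, and then $\eps$ sufficiently small depending on $M$, strictly improves the bootstrap, so a standard continuity argument extends the conclusion to $T^\sharp=\tfrac{4}{1+\alpha}\wedge T_*$.

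The main obstacle is the apparent circular dependence: the Duhamel bound for $q^z\cir\psi$ is cleanest when $\psi_x\sim 1$, but $\psi_x$ itself depends, through \eqref{eq:psi_x}, on $\int\p_y z\cir\psi$, which is precisely the quantity we wish to control. The bootstrap breaks this loop: under $|\p_y z|\le M\eps$ the exponent in \eqref{eq:psi_x} is of order $M\eps$, so $\psi_x=(1+\OO(M\eps))(\sigma_0/\sigma\cir\psi)^{(1-\alpha)/(2\alpha)}$, which is $\sim 1$ with implicit constants that do not depend on $M$ once $\eps$ is small compared to $1/M$. This tiered smallness of $\eps$ is absorbed into the smallness assumption on $\eps$ at the outset and does not enter the final absolute constants in the conclusion.
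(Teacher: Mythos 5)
Your argument is correct and follows essentially the same bootstrap strategy as the paper: the paper bootstraps on the Eulerian bound $|q^z|\le C\eps$, which (given Lemma~\ref{lem:1:2}'s $|\p_y k|\lesssim\eps$ and $\sigma\sim 1$) is equivalent up to an $\OO(\eps)$ shift to your bootstrap on $\|\p_y z\|_{L^\infty}\le M\eps$; both then feed Lemma~\ref{lem:1:1} and the Duhamel identity \eqref{eq:q^z} to close. Your explicit discussion of the apparent circularity (that $\psi_x\sim 1$ requires the bootstrap bound on $\p_y z$, with implicit constants becoming $M$-independent once $\eps\ll 1/M$) is a useful clarification of the same mechanism the paper uses implicitly when it says ``$\eps$ small enough relative to $C$.''
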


\begin{proof}[Proof of Lemma ~\ref{lem:1:3}]
Another simple bootstrap argument. Let $T \in [0, \frac{4}{1+\alpha} \wedge T_*]$ and let us assume that 
$$ |q^z| \leq C\eps $$
where $C > 1$ is a constant to be determined. Since $\p_y k = \OO(\eps)$ and $\sigma \sim 1$ (see Lemma ~\ref{lem:1:2}) , it follows from the bootstrap assumption and \eqref{eq:psi_x} that $\psi_x \sim 1$ provided that $\eps$ is small enough relative to $C$.

The bootstrap assumption and the bounds from Lemma ~\ref{lem:1:2} let allow us to use Lemma ~\ref{lem:1:1} to conclude that $\int^t_0 |q^w\cir\eta| \lesssim 1$. Since $\psi_x \sim 1$ for $(x,t) \in \TT \times [0,T]$, we now conclude from \eqref{eq:q^z} that
$$ |\psi_x q^z\cir\psi - z'_0| \lesssim \eps. $$
It therefore follows that if $C > 1$ is chosen large enough and $\eps$ is chosen small enough the bootstrap argument closes.

The rest of the inequalities are now immediate from Lemma ~\ref{lem:1:1}.
\end{proof}


\begin{proposition}[\bf Lagrangian blowup criterion]
\label{prop:T_*}
The blowup time $T_*$ (defined by the Eulerian blowup criterion~\eqref{eq:LCC}) satisfies
\begin{equation}
\label{eq:T_*}
T_* = \tfrac{2}{1+\alpha} + \OO(\eps),
\end{equation}
and therefore the bounds from the previous lemmas hold up to time $T_*$. Furthermore, the Eulerian blowup criterion~\eqref{eq:LCC} implies the Lagrangian blowup criterion
\begin{equation}
\label{lim:eta_x}
\liminf_{t \rightarrow T_*} \Bigl( \min_{x \in \TT} \eta_x(x,t) \Bigr) = 0,
\end{equation} 
and we have the bound
\begin{equation}
\label{ineq:eta_x}
\inf_{\substack{1\leq |x| \leq \pi \\ 0 \leq t \leq T_*}} \eta_x(x,t) > \tfrac{1}{2}.
\end{equation}
Lastly, we note that the Lagrangian blowup criterion~\eqref{lim:eta_x} also implies the Eulerian blowup criterion~\eqref{eq:LCC}, and so we may use \eqref{lim:eta_x} as the definition of $T_*$.
\end{proposition}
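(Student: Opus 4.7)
The cornerstone of the argument is the identity $\eta_x\, q^w \circ \eta = \bar w_0' + \OO(\eps)$ from Lemma~\ref{lem:1:3}, combined with the transport equation $\p_t \eta_x = \eta_x\, \p_y \lambda_3 \circ \eta$. The plan is to convert this into a pointwise formula for $\eta_x$ itself, and then to read off the entire proposition from the explicit structure of $\bar w_0'$. Writing $\p_y \lambda_3 = \tfrac{1+\alpha}{2} q^w + \tfrac{1-\alpha}{2} q^z + \tfrac{\alpha}{2\gamma} \sigma\, \p_y k$ and using the bounds $q^z, \p_y k = \OO(\eps)$ available from Lemmas~\ref{lem:1:2}--\ref{lem:1:3}, one obtains
\[
\eta_x\, \p_y \lambda_3 \circ \eta = \tfrac{1+\alpha}{2} \bar w_0'(x) + \OO(\eps) + \OO(\eps)\, \eta_x.
\]
Integrating in time and absorbing the last term via the uniform bound $\eta_x \leq 3+\alpha$ yields the key formula
\[
\eta_x(x,t) = 1 + \tfrac{1+\alpha}{2}\, t\, \bar w_0'(x) + \OO(\eps),
\]
valid on $\TT \times [0, \tfrac{4}{1+\alpha} \wedge T_*]$.

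From this single formula, \eqref{ineq:eta_x} is immediate: on $1 \leq |x| \leq \pi$ the assumption $\bar w_0'(x) \geq 0$ yields $\eta_x \geq 1 - \OO(\eps) > \tfrac{1}{2}$. For the characterization \eqref{eq:T_*} of $T_*$, the minimum of $\bar w_0'$ is attained at $x = 0$ with $\bar w_0'(0) = -1$, so $\min_x \eta_x(\cdot, t) = \eta_x(0,t) = 1 - \tfrac{1+\alpha}{2} t + \OO(\eps)$, which first vanishes at $T_0 := \tfrac{2}{1+\alpha} + \OO(\eps)$. I would then argue $T_* = T_0$ in two steps: first, for any $t < T_0$ the function $\eta_x$ is uniformly bounded below by a positive constant, so via $\p_y w \circ \eta = q^w \circ \eta + \OO(\eps) = (\bar w_0'(x) + \OO(\eps))/\eta_x$ together with $\p_y z, \p_y k = \OO(\eps)$ the Eulerian blowup integral in \eqref{eq:LCC} remains finite, showing $T_* \geq T_0$; second, as $t \nearrow T_0$ we have $\eta_x(0,t) \to 0^+$, which through the same identity forces $\p_y w(\eta(0,t), t) \to -\infty$, showing $T_* \leq T_0$.

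The implication from Eulerian blowup \eqref{eq:LCC} to the Lagrangian criterion \eqref{lim:eta_x} follows by contrapositive: if $\liminf_{t \to T_*} \min_x \eta_x > 0$, then $\p_y w, \p_y z, \p_y k$ would be uniformly bounded on $\TT \times [0, T_*)$ (using the same identity plus the $\OO(\eps)$ bounds on $\p_y z, \p_y k$), contradicting \eqref{eq:LCC}. For the converse, if $\eta_x(x_n, t_n) \to 0$ along some sequence with $t_n \to T_*$, the explicit formula for $\eta_x$ forces $\bar w_0'(x_n) \to -1$, and hence $\p_y w(\eta(x_n, t_n), t_n) = (\bar w_0'(x_n) + \OO(\eps))/\eta_x(x_n, t_n) \to -\infty$, triggering \eqref{eq:LCC}.

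The main technical point to watch is the uniformity of the $\OO(\eps)$ remainder in the formula for $\eta_x$; one must verify that the Gr\"onwall-type absorption using the a priori bound $\eta_x \leq 3+\alpha$ truly produces a remainder that is small uniformly in $(x,t) \in \TT \times [0, \tfrac{4}{1+\alpha} \wedge T_*]$. Provided $\eps$ is chosen small relative to $\alpha$, the sign structure of $\bar w_0'$ survives the perturbation and the strict minimum at $x = 0$ is preserved, which is precisely what allows all four conclusions of the proposition to be read off cleanly from the single pointwise formula for $\eta_x$.
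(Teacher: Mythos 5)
Your derivation of the key pointwise formula $\eta_x(x,t) = 1 + \tfrac{1+\alpha}{2}\, t\, \bar w_0'(x) + \OO(\eps)$ is exactly the computation the paper performs (this is \eqref{eq:eta_x:approx}), and your deduction of \eqref{ineq:eta_x} from it is identical. Your treatment of \eqref{eq:T_*} also hinges on the same formula, though the paper gets the upper bound $T_* \leq \tfrac{2}{1+\alpha}+\OO(\eps)$ more directly from the constraint $0 \leq \eta_x(0,t) = 1-\tfrac{1+\alpha}{2}t+\OO(\eps)$ (positivity of $\eta_x$ on $[0,T_*)$), rather than passing through $\p_y w \to -\infty$. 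Your contrapositive for the Eulerian $\Rightarrow$ Lagrangian implication also matches the paper in spirit.

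The one genuine gap is in your argument that \eqref{lim:eta_x} implies \eqref{eq:LCC}. You show that $\p_y w(\eta(x_n,t_n),t_n) \to -\infty$ along a sequence, and then assert this ``triggers'' \eqref{eq:LCC}. But \eqref{eq:LCC} is an integral criterion, and an unbounded supremum does not by itself imply divergence of $\int_0^{T_*}\|\p_y w(\cdot,t)\|_{L^\infty}\,dt$ (e.g., an integrable $(T_*-t)^{-1/2}$ singularity has unbounded sup-norm but finite integral). The paper instead argues the contrapositive: if the integral is bounded by some $M<\infty$, then \eqref{eq:c} bounds $\sigma\circ\eta/\sigma_0$ between $e^{-\alpha M}$ and $e^{\alpha M}$, and plugging into the exponential formula \eqref{eq:eta_x} gives $e^{-M} \leq \eta_x \leq e^M$ uniformly, so \eqref{lim:eta_x} fails. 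This correctly converts finiteness of the time integral into a uniform lower bound on $\eta_x$. To repair your version, you would need to pass through the exponential identity $\eta_x = (\sigma_0/\sigma\circ\eta)^{\frac{1-\alpha}{2\alpha}}\,e^{\frac{1+\alpha}{2}\int_0^t \p_y w\circ\eta}$ and observe that $\eta_x(x_n,t_n)\to 0$ forces $\int_0^{t_n}\p_y w\circ\eta(x_n,s)\,ds \to -\infty$, hence $\int_0^{T_*}\|\p_y w\|_{L^\infty}\,dt = \infty$; the pointwise blowup of $\p_y w$ is a consequence but not the right intermediate step.
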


\begin{proof}[Proof of Proposition ~\ref{prop:T_*}]
We know from Lemma ~\ref{lem:1:3} that for $(x,t) \in \TT \times [0,\frac{4}{1+\alpha} \wedge T_*]$ we have $\eta_x q^w\circ \eta = \bar{w}_0' + \OO(\eps)$, so it follows from Lemma \ref{lem:1:2} that
\begin{align}
\eta_x & = 1 + \int^t_0 \! \! \eta_x \p_y \lambda_3 \cir \eta \: ds 
= 1 + \tfrac{1+\alpha}{2} \int^t_0 \! \! \eta_x q^w\cir \eta \: ds + \OO(\eps) 
= 1 + \tfrac{1+\alpha}{2} t\bar{w}_0' + \OO(\eps)
\label{eq:eta_x:approx}
\end{align}
for $(x,t) \in \TT \times [0, \frac{4}{1+\alpha} \wedge T_*]$. It follows that
$$ 0 \leq \eta_x(0,t) = 1-\tfrac{1+\alpha}{2}t + \OO(\eps) $$
for $t \in [0,\frac{4}{1+\alpha},\wedge T_*]$. Therefore, we must have $T_* \leq \tfrac{2}{1+\alpha} + \OO(\eps)$.

Since $T_* = \frac{4}{1+\alpha} \wedge T_*$, it follows that $k,z,\p_yk,$ and $\p_y z$ are $\OO(\eps)$ and $w$ is $\OO(1)$ up to time $T_*$. Therefore, it follows from \eqref{eq:LCC} that
$$ \limsup_{t \rightarrow T_*} \|\p_y w(\cdot, t) \|_{L^\infty} = \infty. $$
Since  $\p_y w\cir \eta = (\bar{w}'_0 + \OO(\eps)) \eta_x^{-1}$, we conclude that 
$$
\infty 
= \limsup_{t\to T_*} \|\p_y w(\cdot,t)\|_{L^\infty} 
= \limsup_{t\to T_*} \|\p_y w \circ \eta (\cdot,t)\|_{L^\infty}
\leq 2 \limsup_{t\to T_*} \|\eta_x^{-1}(\cdot,t)\|_{L^\infty}
\,,
$$
and hence 
\eqref{lim:eta_x} holds.

Since
$$ \eta_x \geq 1- \tfrac{1+\alpha}{2}t + \OO(\eps) $$
for all $(x,t) \in \TT \times [0,T_*)$, we conclude that $T_* \geq \frac{1+\alpha}{2} + \OO(\eps)$, otherwise \eqref{lim:eta_x} would be violated.

To see why \eqref{ineq:eta_x} is true, just note that $\bar{w}'_0(x) \geq 0$ for $1 \leq |x| \leq \pi$, so that $\eta_x \geq 1 + \OO(\eps)$ for $1 \leq |x| \leq \pi$.

Lastly, to see that~\eqref{lim:eta_x} implies~\eqref{eq:LCC}, we show that {\em not}~\eqref{eq:LCC} implies {\em not}~\eqref{lim:eta_x}. More precisely, if there exists $0<M<\infty$ such that 
$$
\int^{T_*}_0 \Bigl(\|\p_y w(\cdot, t)\|_{L^\infty} + \|\p_y z(\cdot, t)\|_{L^\infty} \Bigr)    dt
\leq M,
$$
then \eqref{eq:c} and the triangle inequality gives $e^{-\alpha M} \leq  \frac{\sigma\circ \eta}{\sigma_0}(x,t)  \leq e^{\alpha M}$ for all $(x,t)\in \TT\times [0,T_*]$, which may be combined with \eqref{eq:eta_x} to yield
$e^{-M} \leq \eta_x(x,t) \leq e^{M} $
for all $(x,t)\in \TT\times [0,T_*]$. 
Thus, \eqref{lim:eta_x} fails, concluding the proof.
\end{proof}

Let us now prove an important inequality for $\eta_x q^w\cir\eta$ using \eqref{eq:T_*}. For $\frac{1}{1+\alpha} \leq t \leq T_*$, \eqref{eq:eta_x:approx} tells us that
$$ \bar{w}'_0 = \tfrac{2}{1+\alpha}(\eta_x-1+\OO(\eps) ) \tfrac{1}{t} . $$
Therefore, for $\frac{1}{1+\alpha} \leq t \leq T_*$ we have
\begin{align*}
\eta_x q^w\cir\eta
= \bar{w}'_0 + \OO(\eps) 
& = \tfrac{1}{t} \tfrac{2}{1+\alpha} \eta_x + \tfrac{1}{t} \tfrac{2}{1+\alpha} (-1 + \OO(\eps)) \\
& \leq 2 \eta_x + \tfrac{1}{T_*}\tfrac{2}{1+\alpha} (-1 + \OO(\eps)) \\
& \leq 2\eta_x -1 + \OO(\eps) \\
& \leq -\tfrac{1}{2} + 2\eta_x.
\end{align*}
For $ 0 \leq t \leq \tfrac{1}{1+\alpha}$, \eqref{eq:eta_x:approx} gives 
\begin{align*}
\eta_x 
& = 1+ \tfrac{1+\alpha}{2} t \bar{w}'_0 + \OO(\eps) 
\geq 1- \tfrac{1+\alpha}{2} t + \OO(\eps) 
\geq \tfrac{1}{2} + \OO(\eps).
\end{align*}
Therefore, for $ 0 \leq t \leq \tfrac{1}{1+\alpha}$ we get
\begin{align*}
\eta_x q^w\cir\eta 
& = \bar{w}'_0 + \OO(\eps) 
\leq 1 + \OO(\eps) 
= (1-4\eta_x) + \OO(\eps) + 4\eta_x 
\leq - \tfrac{1}{2} + 4\eta_x.
\end{align*}
We have now proven that
\begin{equation}
\label{ineq:q^w}
\eta_x q^w \cir \eta \leq -\tfrac{1}{2} + 4\eta_x \qquad \forall \: (x,t) \in \TT \times [0, T_*).
\end{equation}

\subsection{An identity for $\p^2_y k$}
\label{sec:k}

Note that since $\sigma$ and $u$ are $C^1$ on $\TT \times [0,T_*)$, it follows from \eqref{eq:phi_x} and the fact that $\phi_t = u\cir \phi$ that $\phi$ is $C^2$ on $\TT \times [0,T_*)$. Since $\phi_x \sim 1$, it follows that $\phi^{-1}$ is $C^2$ on $\TT \times[0,T_*)$. It follows $k = k_0 \cir \phi^{-1} \in W^{2,q}_{\loc} (\TT \times [0,T_*))$.

Differentiating \eqref{eq:k_y} in $x$  and using \eqref{eq:phi_x} gives us
\begin{align*}
(\p^2_y k - \tfrac{1}{\alpha} \sigma^{-1} \p_y \sigma \p_y k)\cir \phi & = \phi^2_x (k''_0 -\tfrac{1}{\alpha}\sigma^{-1}_0 \sigma'_0 k'_0).
\end{align*}
Therefore,
\begin{align}
\label{eq:k_yy}
\p^2_y k & = \tfrac{1}{\alpha} \sigma^{-1} \p_y \sigma \p_y k + \big[ \phi^2_x (k''_0 -\tfrac{1}{\alpha}\sigma^{-1}_0 \sigma'_0 k'_0)\big] \cir \phi^{-1}.
\end{align}


\section{Identities along the 3-characteristic}
\label{sec:3charid}

In this section, $\beta \geq 1$, $\bar{w}_0$ will be a function satisfying the hypotheses described in \S~\ref{sec:ID}, and $(w,z,k)$ will be the solution of \eqref{eq:euler:RV} on $\TT \times [0,T_*)$ for given initial data $(w_0,z_0,k_0)$ satisfying
\begin{itemize}
\item $\|w_0 - (1+\bar{w}_0) \|_{C^1} < \eps$,
\item $\|z_0\|_{C^1} < \eps$,
\item $\|k_0 - \fint_\TT k_0 \|_{C^1} < \eps$,
\item $w_0,z_0, k_0 \in W^{2,q}(\TT)$ for some $1< q \leq \infty$.
\end{itemize}
All implicit constants will be independent of our choice of $(w_0,z_0,k_0)$ satisfying these constraints and independent of our choice of $\beta \geq 1$, but will depend on our choice of $\alpha > 0$. 

Adopt the notation
\begin{align*}
W & : = w\cir \eta, & \W & : = q^w\cir \eta, \\
Z & : = z\cir \eta, & \Z & : = q^z \cir \eta, \\
K & : = k \cir \eta, & \K & : = \p_y k \cir \eta, \\
\Sigma & : = \sigma\cir \eta, & \C & : = \p_y \sigma\cir \eta.
\end{align*}

Using \eqref{eq:euler:RV} and the equation $\eta_t = \lambda_3 \circ \eta$ gives us
\begin{align}
\label{eq:K_t}
K_t & = \alpha \Sigma \K, \\
\label{eq:Z_t}
Z_t & = \Sigma(2\alpha \Z- \tfrac{1}{2\gamma} K_t) \\
\label{eq:C_t}
\Sigma_t & = - \Sigma(\alpha \Z - \tfrac{1}{2\gamma} K_t) \\
\label{eq:eta_xt}
\eta_{xt} & = \eta_x (\tfrac{1+\alpha}{2} \W + \tfrac{1-\alpha}{2} \Z + \tfrac{1}{2\gamma} K_t), 
\end{align}

Taking $\p_x$ of \eqref{eq:K_t} and using gives us
\begin{align}
\label{eq:K_xt}
K_{xt} & =  \eta_x(\alpha \C \K + \alpha \Sigma \p^2_y k \cir \eta) 
\end{align}
Since $K_{xt} = \eta_{xt} \K + \eta_x \K_t$, it follows from \eqref{eq:k_yy},  \eqref{eq:eta_xt}, and \eqref{eq:K_xt} that 
\begin{align}
\label{eq:K_yt}
\K_t 
= \tfrac{K_{xt}-\eta_{xt}\K}{\eta_x} 
&  = -\K\Z + \tfrac{1}{2\gamma} \Sigma \K^2 + \alpha \Sigma \big[ \phi^2_x(k''_0- \tfrac{1}{\alpha} \sigma^{-1}_0 \sigma'_0 k'_0 ) \big] \cir \phi^{-1} \cir \eta \notag \\
& = \OO(\eps) + \OO(k''_0 \circ \phi^{-1} \cir \eta).
\end{align}
Therefore, taking $\p_t$ of \eqref{eq:K_t} gives us
\begin{align}
\label{eq:K_tt}
K_{tt} & = \OO(\eps + k''_0 \circ \phi^{-1} \cir \eta).
\end{align}
Taking $\p_t$ of \eqref{eq:C_t} and using \eqref{eq:K_tt} yields
\begin{align}
\label{eq:C_tt}
\Sigma_{tt} & =- \alpha \Sigma \Z_t + \OO(\eps + k''_0 \circ \phi^{-1} \cir \eta).
\end{align}

Taking $\p_t$ of \eqref{eq:q^w} gives us
\begin{align}
\label{eq:p_teta_xW}
\p_t( \eta_x \W) & = \tfrac{1}{4\gamma} \eta_x K_t(\W + \Z).
\end{align}
Taking $\p_t$ of \eqref{eq:eta_xt} and using \eqref{eq:K_tt} and \eqref{eq:p_teta_xW} produces
\begin{align}
\label{eq:eta_xtt}
\eta_{xtt} = \eta_x\big[ \tfrac{1+\alpha}{2} \W (\tfrac{1-\alpha}{2} \Z + \tfrac{3}{4\gamma} K_t) + \tfrac{1-\alpha}{2} \Z_t + \OO(\eps+ k''_0 \circ \phi^{-1} \cir \eta) \big].
\end{align}

It is immediate that
\begin{align*}
\eta_x \Z & = Z_x + \tfrac{1}{2\gamma} \Sigma K_x. 
\end{align*}
Taking $\p_t$ of this equation and using \eqref{eq:Z_t} gives us
\begin{align}
\label{eq:important:1}
 \p_t(\eta_x \Z) & = 2\alpha \p_x(\Sigma \Z) + \tfrac{1}{2\gamma} (\Sigma_t K_x - \Sigma_x K_t).
\end{align}
Since
\begin{align*}
\p_t(\eta_x \Z) & = \eta_{xt} \Z + \eta_x \Z_t,
\end{align*}
it follows from \eqref{eq:eta_xt} and \eqref{eq:important:1} that
\begin{align}
\label{eq:CZ_x}
2\alpha \Sigma \Z_x & = \eta_x \Z_t - 2\alpha \Sigma_x \Z + \tfrac{1}{2\gamma} (\Sigma_x K_t - \Sigma_t K_x)  + \eta_{xt} \Z \notag \\
& = \eta_x \Z_t - 2\alpha \Sigma_x \Z + \tfrac{1}{2\gamma} (\Sigma_x K_t - \Sigma_t K_x)  
+ \eta_x\Z( \tfrac{1+\alpha}{2} \W + \tfrac{1-\alpha}{2} \Z + \tfrac{1}{2\gamma} K_t).
\end{align}
Taking $\p_x$ of \eqref{eq:C_t} gives us
\begin{align}
\label{eq:C_tx}
\Sigma_{tx} & = -\Sigma_x(\alpha \Z - \tfrac{1}{2\gamma} K_t) - \Sigma(\alpha \Z_x - \tfrac{1}{2\gamma} K_{tx}).
\end{align}
So it follows that taking $\p_t$ of \eqref{eq:important:1} \footnote{ For those concerned that we may not have enough derivatives in $L^2$ to do this, we can circumnavigate this issue by first taking data in $U_\eps \cap C^\infty$, deriving the estimates from \S~\ref{sec:EE}-\ref{sec:approx} in this case, and then passing the estimates to the limit to all solutions with data in $V_\eps$.} and using \eqref{eq:C_tx}, \eqref{eq:C_t}, \eqref{eq:eta_xt}, \eqref{eq:eta_xtt}, \eqref{eq:C_tt}, \eqref{eq:K_xt}, \eqref{eq:K_tt}, and \eqref{eq:CZ_x} results in
\begin{align}
\label{eq:important:2}
\eta_x \Z_{tt}& = 2 \alpha \Sigma \Z_{xt} + \eta_x \Z_t(-\W + \OO(\eps)) + \eta_x\big( \OO(\eps) \W + \OO(\eps) + \OO(\eps k''_0 \cir \phi^{-1} \cir \eta) \big).
\end{align}
This identity is the main computational step for the energy estimates in the next section.

Similar computations to those used to obtain \eqref{eq:K_t}--\eqref{eq:eta_xt} allow us to write
\begin{align*}
\Z_t & = \big( -\p_y\lambda_3q^z + 2\alpha \sigma \p^2_y z  
+ \alpha (2\p_y \sigma-\tfrac{1}{2\gamma} \sigma \p_yk) \p_y z 
+ \tfrac{3\alpha}{2\gamma} \sigma \p_y \sigma \p_y k + \tfrac{\alpha}{\gamma} \sigma^2 \p^2_y k \big) \cir \eta.
\end{align*}
This gives us
\begin{align}
\label{eq:Z_t:0}
\Z_t(\cdot, 0) & = - (\tfrac{1+\alpha}{2}w'_0 + \tfrac{1-\alpha}{2} z'_0) (z'_0 + \tfrac{1}{2\gamma} \sigma_0 k'_0) \notag \\
&\qquad  + 2\alpha \sigma_0 z''_0 + \alpha( 2\sigma'_0 - \tfrac{1}{2\gamma} \sigma_0 k'_0 ) z'_0 
+ \tfrac{3\alpha}{2\gamma} \sigma_0 \sigma'_0 k'_0 + \tfrac{\alpha}{\gamma} \sigma^2_0 k''_0.
\end{align}
So 
\begin{align}
\label{ineq:Z_t:0}
|\Z_t(x,0)| \lesssim  |z''_0| + |k''_0| + \eps.
\end{align}
This inequality will be utilized in the energy estimates below.


\section{Energy estimates}
\label{sec:EE}

In this section, $1< p < \infty$, $\beta \geq 1$, $\bar{w}_0$ will be a function satisfying the hypotheses described in \S~\ref{sec:ID}, and $(w,z,k)$ will be the solution of \eqref{eq:euler:RV} on $\TT \times [0,T_*)$ for given initial data $(w_0,z_0,k_0)$ satisfying
\begin{itemize}
\item $\|w_0 - (1+\bar{w}_0) \|_{C^1} < \eps$,
\item $\|z_0\|_{C^1} < \eps$,
\item $\|k_0 - \fint_\TT k_0 \|_{C^1} < \eps$,
\item $w_0 \in W^{2,q}(\TT)$ for some $1< q< \infty$,
\item $ z_0, k_0 \in W^{2,p}(\TT)$.
\end{itemize}
All implicit constants will be independent of our choice of $p \in (1,\infty)$, independent of our choice of $(w_0,z_0,k_0)$ satisfying these constraints, and independent of our choice of $\beta \geq 1$, but will depend on our choice of $\alpha > 0$.

Let $b > 0$ be a parameter to be determined, and suppose that the quantity
\begin{equation}
E_p(t) : = \int_\TT \! \! \Sigma^{-bp}(x,t) \eta_x(x,t) |\Z_t(x,t)|^p \: dx
\end{equation}
defines a finite, differentiable function for $t \in [0,T_*)$. \footnote{ We will address this assumption below. See Remark \ref{remark}}

Using the identities from \S ~\ref{sec:3charid}, we have
\begin{align*}
\dot{E}_p & = -bp\int \! \! \tfrac{\Sigma_t}{\Sigma} \Sigma^{-bp} \eta_x |\Z_t|^p + \int \! \! \Sigma^{-bp} \eta_{xt} |\Z_t|^p + p \int\! \! \Sigma^{-bp} \eta_x \sgn(\Z_t) |\Z_t|^{p-1} \Z_{tt} \\
& = \int \! \! \Sigma^{-bp} \eta_x |Z_t|^p \big[ \tfrac{1+\alpha}{2} \W + \OO( (1+b)(1+p) \eps) \big] + p \int\! \! \Sigma^{-bp} \eta_x \sgn(\Z_t) |\Z_t|^{p-1} \Z_{tt} \\
& = \int \! \! \Sigma^{-bp} \eta_x |Z_t|^p \big[ \tfrac{1+\alpha}{2} \W + \OO( (1+b)(1+p) \eps) \big] + 2\alpha p \int \! \! \Sigma^{1-bp} \sgn(\Z_t) |\Z_t|^{p-1} \Z_{xt} \\& + p \int \! \! \Sigma^{-bp} \eta_x |Z_t|^p [-\W + \OO(\eps)] + p \int \! \! \Sigma^{-bp} \eta_x \sgn(\Z_t) |Z_t|^{p-1} \OO(\eps) \big[ \W + \OO(1) + \OO(k''_0\cir\phi^{-1}\cir\eta)\big] \\
& = \int \Sigma^{-bp} \eta_x |\Z_t|^p \big[ (\tfrac{1+\alpha}{2} -p) \W + \OO( (1+p)(1+b)\eps)) \big] \\
& +2\alpha p \int \! \! \Sigma^{1-bp} \sgn(\Z_t) |\Z_t|^{p-1} \Z_{xt}  \\
& + p \int \! \! \Sigma^{-bp} \eta_x \sgn(\Z_t) |Z_t|^{p-1} \OO(\eps) \big[ \W + \OO(1) + \OO(k''_0\cir\phi^{-1}\cir\eta)\big]. 
\end{align*}
Since
\begin{align*}
\Sigma^{1-bp} \sgn(\Z_t) |\Z_t|^{p-1} \Z_{xt} & = \tfrac{1}{p} \Sigma^{1-bp} \p_x \big( |\Z_t|^p \big) \\
& = \tfrac{1}{p} \p_x\big(\Sigma^{1-bp} |\Z_t|^p \big) - \tfrac{1-bp}{p} \Sigma^{-bp} \Sigma_x |\Z_t|^p \\ 
& = \tfrac{1}{p} \p_x\big(\Sigma^{1-bp} |\Z_t|^p \big) - \tfrac{1-bp}{2p} \Sigma^{-bp} \eta_x |\Z_t|^p( \W + \OO(\eps)),
\end{align*}
it follows that
\begin{align*}
\tfrac{1}{p}\dot{E}_p & = \int \! \! \Sigma^{-bp} \eta_x |\Z_t|^p \big[ (\alpha b-1 + \tfrac{1-\alpha}{2p}) \W + \OO( (1+b) \eps) \big]  \\
& + \int \! \! \Sigma^{-bp} \eta_x \sgn(\Z_t) |\Z_t|^{p-1} \OO(\eps) \big[ \W + \OO(1) + \OO(k''_0\cir\phi^{-1}\cir\eta) \big] \\
& = \int \! \!  \Sigma^{-bp} \eta_x \W\big[ (\alpha b-1 + \tfrac{1-\alpha}{2p}) |\Z_t|^p + \OO(\eps) |\Z_t|^{p-1} \big] \\
& + \OO((1+b)\eps)E_p + \int \! \! \Sigma^{-bp} \eta_x |\Z_t|^{p-1} \OO(\eps k''_0\cir\phi^{-1}\cir \eta) \\
& \leq \int \! \!  \Sigma^{-bp} \eta_x \W\big[ (\alpha b-1 + \tfrac{1-\alpha}{2p}) |\Z_t|^p + \OO(\eps) |\Z_t|^{p-1} \big] \\
& + \OO((1+b)\eps)E_p + \tfrac{p-1}{p} \OO(\eps) E_p + \tfrac{\OO(\eps)}{p} \int \Sigma^{-bp} \eta_x |\OO(k''_0\cir\phi^{-1}\cir \eta)|^p\\
& = \int \! \!  \Sigma^{-bp} \eta_x \W\big[ (\alpha b-1 + \tfrac{1-\alpha}{2p}) |\Z_t|^p + \OO(\eps) |\Z_t|^{p-1} \big] \\
& + \OO((1+b)\eps)E_p + \OO\big( \frac{\eps}{p} \|\Sigma^{-b}\|^p_{L^\infty_x} \|k''_0\|^p_{L^p_x} \bigg).
\end{align*}

It's easy to check that
\begin{align*}
(\alpha b-2+ \tfrac{3-\alpha}{2p}) |\Z_t|^p + \tfrac{1}{p} \eps^p|\OO(1)|^p 
&\leq (\alpha b-1 + \tfrac{1-\alpha}{2p}) |\Z_t|^p + \OO(\eps) |\Z_t|^{p-1} 
\notag\\
&\leq (\alpha b - \tfrac{1+\alpha}{2p}) |\Z_t|^p + \tfrac{1}{p} \eps^p|\OO(1)|^p,
\end{align*}
so
\begin{align*}
&\eta_x \W\big[ (\alpha b-1 + \tfrac{1-\alpha}{2p}) |\Z_t|^p + \OO(\eps) |\Z_t|^{p-1} \big] 
\notag\\
& = \eta_x (\W)_+\big[ (\alpha b-1 + \tfrac{1-\alpha}{2p}) |\Z_t|^p + \OO(\eps) |\Z_t|^{p-1} \big] \\
&\qquad  - \eta_x (\W)_-\big[ (\alpha b-1 + \tfrac{1-\alpha}{2p}) |\Z_t|^p + \OO(\eps) |\Z_t|^{p-1} \big] \\
& \leq \eta_x(\W)_+ \big[ (\alpha b - \tfrac{1+\alpha}{2p}) |\Z_t|^p + \tfrac{1}{p} \eps^p|\OO(1)|^p\big] \\
&\qquad  - \eta_x(\W))_-\big[ (\alpha b - 2 + \tfrac{3-\alpha}{2p}) |\Z_t|^p - \tfrac{1}{p}\eps^p |\OO(1)|^p \big] \\
& = \big[ (\alpha b - \tfrac{1+\alpha}{2p}) \mathbf{1}_{\{\W> 0\}}+ (\alpha-2+\tfrac{3-\alpha}{2p}) \mathbf{1}_{\{\W< 0\}} \big] |\Z_t|^p \eta_x \W \\
&\qquad  + \big[ (\alpha b - \tfrac{1+\alpha}{2p}) \mathbf{1}_{\{\W> 0\}} + (\alpha b-2 + \tfrac{3-\alpha}{2p} \mathbf{1}_{\{\W< 0\}} \big] \tfrac{1}{p} \eps^p |\OO(1)|^p \eta_x |\W|.
\end{align*}
Now choose $b$ to be $b = \tfrac{2}{\alpha} + \frac{1}{2}$. The above inequality becomes
\begin{align*}
 \eta_x \W\big[ (\alpha b-1 + \tfrac{1-\alpha}{2p}) |\Z_t|^p + \OO(\eps) |\Z_t|^{p-1} \big] & \leq \big[ \tfrac{4p-1+(p-1)\alpha}{2p} \mathbf{1}_{\{\W > 0\}} + \tfrac{3+ (p-1)\alpha}{2p} \mathbf{1}_{\{ \W < 0\}} \big] |\Z_t|^p \eta_x \W \\
& + \tfrac{4p-1+(p-1)\alpha}{2p^2} \eps^p |\OO(1)|^p \eta_x |\W|.
\end{align*}
Using \eqref{ineq:q^w} and the fact that $\eta_x \W = \OO(1)$, we get
\begin{align*}
\eta_x \W\big[ (\alpha b-1 + \tfrac{1-\alpha}{2p}) |\Z_t|^p + \OO(\eps) |\Z_t|^{p-1} \big] & \leq - \tfrac{3+(p-1)\alpha}{4p} |\Z_t|^p  \\
& + \tfrac{4(4p-1+(p-1)\alpha)}{2p} \eta_x |\Z_t|^p + \tfrac{4p-1+(p-1)\alpha}{2p^2}\eps^p |\OO(1)|^p |\OO(1)|.
\end{align*}
Since $b = \tfrac{2}{\alpha} + \frac{1}{2} = \OO(1)$, we have $\Sigma^{-b} = \OO(1)$. Using this last inequality, we now have
\begin{align}
\label{ineq:EE:1}
&\dot{E}_p + \tfrac{3+(p-1)\alpha}{4} \int \Sigma^{-bp} |\Z_t|^p 
\notag\\
& \leq \big[ \tfrac{4(4p-1+(p-1)\alpha)}{2} + \OO(\eps)\big] E_p + \big[\tfrac{4p-1+(p-1)\alpha}{2p} + \eps + \eps \|k''_0\|^p_{L^p_x}\big]\eps^p |\OO(1)|^p |\OO(1)| 
\notag \\
& \leq  \tfrac{17+\alpha}{2}p E_p + \big[ \tfrac{4+\alpha}{2} + \eps + \eps \|k''_0\|^p_{L^p_x} \big] \eps^p |\OO(1)|^p |\OO(1)|.
\end{align}

ODE comparison and \eqref{eq:T_*} now give us
\begin{align*}
E_p(t) & \leq E_p(0) e^{\tfrac{17+\alpha}{2} p t} + \big[ \tfrac{4+\alpha}{2} + \eps + \eps \|k''_0\|^p_{L^p_x} \big] \eps^p |\OO(1)|^p |\OO(1)| \frac{e^{\tfrac{17+\alpha}{2} p t}-1}{\tfrac{17+\alpha}{2}p} \\
& \leq \bigg( E_p(0) + \frac{ 4+ \alpha + 2 \eps + 2\eps \|k''_0\|^p_{L^p_x}}{(17+\alpha)p} \eps^p |\OO(1)|^p |\OO(1)| \bigg) e^{\tfrac{3(17+\alpha)}{1+\alpha} p } .
\end{align*}
So
\begin{align*}
E_p(t)^{1/p} & \leq \bigg( E_p(0)^{1/p} + \eps (1+ \|k''_0\|_{L^p_x})|\OO(1)| \bigg) |\OO(1)|.
\end{align*}
Using \eqref{ineq:Z_t:0} to bound $E_p(0)^{1/p}$ now gives us
\begin{align}
\label{ineq:EE:2}
E_p(t)^{1/p} & \lesssim \|z''_0\|_{L^p_x} + \|k''_0\|_{L^p_x} + \eps.
\end{align}

Plugging \eqref{ineq:EE:2} into \eqref{ineq:EE:1} now gives us
\begin{align}
\label{ineq:EE:Z_t}
\| \Z_t\|_{L^p_x}  & \lesssim \|z''_0\|_{L^p_x} + \|k''_0\|_{L^p_x} + \eps.
\end{align}
Equation \eqref{eq:CZ_x} tells us that
\begin{equation}
2\alpha \Sigma \Z_x  = \eta_x \Z_t + \OO(\eps),
\end{equation}
so we conclude that
\begin{align}
\label{ineq:EE:Z_x}
\| \Z_x\|_{L^p_x}  & \lesssim \|z''_0\|_{L^p_x} + \|k''_0\|_{L^p_x} + \eps.
\end{align}

\subsection{The case $p= \infty$}

In the case where $z_0, k_0 \in W^{2,\infty}(\TT)$, we have just proven that
\begin{align*}
\| \Z_x\|_{L^p_x}+ \| \Z_t\|_{L^p_x}  & \lesssim \|z''_0\|_{L^p_x} + \|k''_0\|_{L^p_x} + \eps
\end{align*}
for all $1< p < \infty$ with constants independent of $p$. Therefore, sending $p \rightarrow \infty$ gives us
\begin{align}
\label{ineq:EE:infty}
\| \Z_x\|_{L^\infty_x}+ \| \Z_t\|_{L^\infty_x}  & \lesssim \|z''_0\|_{L^\infty_x} + \|k''_0\|_{L^\infty_x} + \eps.
\end{align}


\section{Estimates along the 3-characteristic}
\label{sec:approx}

Let $\beta \geq 1$, $1 < p \leq \infty$, $\bar{w}_0$ will be a function satisfying the hypotheses described in \S~\ref{sec:ID}, and let $p'$ be the H\"{o}lder conjugate of $p$, i.e. $\frac{1}{p'} + \frac{1}{p} =1$, with $p'= 1$ when $p=\infty$. Suppose that
\begin{itemize}
\item $ \|w_0 - (1+\bar{w}_0)\|_{C^1} < \eps$,
\item $[w'_0]_{C^{0,\frac{1}{p'}}} < 2$,
\item $w_0 \in W^{2,q}(\TT)$ for some $1< q \leq \infty$,
\item $\|z_0\|_{W^{2,p}} < \eps$,
\item $\|k_0-\fint_\TT k_0\|_{W^{2,p}} < \eps$.
\end{itemize}
Our energy estimates \eqref{ineq:EE:Z_x}, \eqref{ineq:EE:infty} from the previous section let us conclude that
\begin{align}
\label{ineq:q^z:holder}
[\Z]_{C^{0,\frac{1}{p'}}} \leq \|\Z_x\|_{L^p_x} \lesssim \eps.
\end{align}

Recall that 
\begin{align*}
\eta_x \W & = w'_0+ (e^{\frac{1}{4\gamma} (K-k_0)}-1) w'_0 - \tfrac{e^{\frac{1}{4\gamma}(K-k_0)}}{2\gamma} \sigma_0 k'_0 
+ \tfrac{\alpha}{4\gamma} e^{\frac{1}{4\gamma} K} \int^t_0 e^{-\tfrac{1}{4\gamma}K} \Sigma K_x \Z \: ds, \\
K_x & = \Sigma^{\frac{1}{\alpha}} \eta_x( \tfrac{k'_0}{\sigma^{\frac{1}{\alpha}}} \cir \phi^{-1} \cir \eta), \\
\eta_x & = 1 + \tfrac{1+\alpha}{2} \int^t_0 \eta_x \W \: ds + \tfrac{1-\alpha}{2} \int^t_0 \eta_x \Z \: ds + \tfrac{\alpha}{2\gamma} \int^t_0 \Sigma K_x \: ds.
\end{align*}
Using these equations and \eqref{ineq:q^z:holder}, it is immediate that
\begin{align*}
[\eta_x\W-w'_0]_{C^{0,\frac{1}{p'}}} & \lesssim \eps [w'_0]_{C^{0,\frac{1}{p'}}} + \eps +  \eps(\sup_{[0,t]}[K_x]_{C^{0,\frac{1}{p'}}}), \\
[K_x]_{C^{0,\frac{1}{p'}}} & \lesssim \eps + \eps [\eta_x]_{C^{0,\frac{1}{p'}}} +[ \tfrac{k'_0}{\sigma^{\frac{1}{\alpha}}_0} \cir \phi^{-1} \cir \eta]_{C^{0,\frac{1}{p'}}}, \\
[\eta_x]_{C^{0,\frac{1}{p'}}} & \leq t\sup_{[0,t]} \big( \tfrac{1+\alpha}{2}[\eta_x\W-w'_0]_{C^{0,\frac{1}{p'}}}+ \tfrac{1+\alpha}{2}[w'_0]_{C^{0,\frac{1}{p'}}} + \OO(\eps) [\eta_x]_{C^{0,\frac{1}{p'}}}+ \OO(\eps) + [K_x]_{C^{0,\frac{1}{p'}}} \big).
\end{align*}
We know that 
\begin{equation}
\label{ineq:k'_0:holder}
 [ \tfrac{k'_0}{\sigma^{\frac{1}{\alpha}}_0} \cir \phi^{-1} \cir \eta]_{C^{0,\frac{1}{p'}}} \leq \big\|\tfrac{\eta_x}{\phi_x}(\sigma^{-1/\alpha}_0 k''_0 - \tfrac{1}{\alpha} \sigma^{-1-1/\alpha}_0 \sigma'_0 k'_0) \cir \phi^{-1}\cir \eta \big\|_{L^p_x} \lesssim \eps,
\end{equation}
so, using the fact that $T_* = \frac{2}{1+\alpha} + \OO(\eps)$, a simple bootstrap argument lets us conclude that
\begin{align}
[\eta_x\W-w'_0]_{C^{0,\frac{1}{p'}}} & = \OO(\eps(1 + [w'_0]_{C^{0,\frac{1}{p'}}}) ), \\
[K_x]_{C^{0,\frac{1}{p'}}} & = \OO(\eps), \\
[\eta_x]_{C^{0,\frac{1}{p'}}} & \leq (1+\OO(\eps)) [w'_0]_{C^{0,\frac{1}{p'}}} + \OO(\eps).
\end{align}

Since $W_x = \eta_x\W + \frac{1}{2\gamma}\Sigma K_x$ and $Z_x = \eta_x \Z - \frac{1}{2\gamma} \Sigma K_x$, we conclude that
\begin{equation*}
[W_x-w'_0]_{C^{0,\frac{1}{p'}}} = \OO(\eps) \qquad \text{ and } \qquad  [Z_x]_{C^{0,\frac{1}{p'}}} = \OO(\eps).
\end{equation*}

We know that 
\begin{equation*}
\K =  \Sigma^{\frac{1}{\alpha}} ( \tfrac{k'_0}{\sigma^{\frac{1}{\alpha}}} \cir \phi^{-1} \cir \eta),
\end{equation*}
so it follows from \eqref{ineq:k'_0:holder} that 
\begin{align}
\label{ineq:k_y:holder}
[\K]_{C^{0,\frac{1}{p'}}} & = \OO(\eps).
\end{align}
Using this along with \eqref{eq:K_t} gives us
\begin{equation}
\label{ineq:K_t:holder}
[K_t]_{C^{0,\frac{1}{p'}}} = \OO(\eps).
\end{equation}
Using \eqref{ineq:q^z:holder} and \eqref{ineq:K_t:holder} in \eqref{eq:Z_t} and \eqref{eq:C_t} gives us
\begin{align}
\label{ineq:Z_t:holder}
[Z_t]_{C^{0,\frac{1}{p'}}} & = \OO(\eps), \\
\label{ineq:C_t:holder}
[\Sigma_t]_{C^{0,\frac{1}{p'}}} & = \OO(\eps).
\end{align}
Since $W = 2\Sigma + Z$, it follows that
\begin{equation}
\label{ineq:W_t:holder}
[W_t]_{C^{0,\frac{1}{p'}}}  = \OO(\eps).
\end{equation}

Putting it all together, we have 
\begin{align}
\label{estimates:holder}
\|W-w_0\|_{C^{1,\frac{1}{p'}}} \,,
\|Z\|_{C^{1,\frac{1}{p'}}} \,,  
\|K\|_{C^{1,\frac{1}{p'}}} \,,
\|W_t\|_{C^{0,\frac{1}{p'}}} \,, 
\|Z_t\|_{C^{0,\frac{1}{p'}}} \,, 
\|K_t\|_{C^{0,\frac{1}{p'}}} &= \OO(\eps) \,,
\notag\\ 
\|\Sigma_t\|_{C^{0,\frac{1}{p'}}} \,,
\|\Z\|_{C^{0,\frac{1}{p'}}} \,, 
\|\K\|_{C^{0,\frac{1}{p'}}} &= \OO(\eps).
\end{align}

\subsection{Remarks} 
\label{remark}

Note that while the hypotheses of Theorem \ref{thm:main} preclude $w_0$ from being $C^2$, the hypotheses stated at the beginnings of sections \S~\ref{sec:IE}-\ref{sec:approx} allow for $w_0,z_0,$ and $k_0$ to all be $C^\infty$. One can check that if we start with initial data $(w_0,z_0,k_0)$ satisfying the hypotheses of \S~\ref{sec:IE}-\ref{sec:approx} , we can mollify $w_0,z_0,$ and $k_0$ to produce a sequence of smooth functions satisfying the hypotheses of \S~\ref{sec:IE}-\ref{sec:approx}. In particular, the estimates \eqref{estimates:holder} will hold uniformly for this sequence, and will provide us with compactness which we can use to pass the H\"{o}lder estimates to the limit. The fact that we can do this allows us to circumvent any concerns that the function $E_p(t)$ used in \S~\ref{sec:EE} wasn't a-priori known to be differentiable or even finite.

We should also note that the hypothesis that $\beta \geq 1$ wasn't utilized anywhere in \S~\ref{sec:IE}-\ref{sec:approx}, and the results of these sections still hold for other choices of $\beta > 0$, albeit with the possibility of $\beta$-dependence being introduced into some of the implicit constants and the possibility of the range of sufficiently small $\eps > 0$ being $\beta$-dependent. All implicit constants should be uniform in $\beta \geq r$ for any $r > 0$, but without a lower bound on $\beta$ we may not have uniform implicit constants.


\section{Proving the theorem}
\label{sec:pf}

Let $\beta \geq 1$, and let $p \in (1, \infty]$ be the H\"{o}lder conjugate of $\beta$. Suppose
\begin{itemize}
\item $\|w_0-(1+\bar{w}_0)\|_{C^{1,\frac{1}{\beta}}} < \eps$,
\item $\|z_0\|_{W^{2,p}} < \eps$,
\item $\|k_0-\fint_\TT k_0\|_{W^{2,p}} < \eps$,
\item $w_0 \in W^{2,q}$ for some $1< q< \infty$.
\end{itemize}
Then we know that
\begin{align*}
[W_x-\bar{w}'_0]_{C^{0,\frac{1}{\beta}}} = \OO(\eps),
\end{align*}
so for $|x| \leq 1$ we have
\begin{align}
W_x & = \bar{w}'_0 + (W_x-\bar{w}_0') \notag \\
& = -1 + |x|^{\frac{1}{\beta}} + (W_x(0,t)+1) + \OO(\eps)|x|^{\frac{1}{\beta}} \notag \\
\label{eq:W_x:thm:1}
& = W_x(0,t) + (1+\OO(\eps)) |x|^{\frac{1}{\beta}}.
\end{align}
Therefore, for $|x| \leq 1$ our H\"{o}lder estimates from the previous section give us
\begin{align}
\eta_x(x,t) & = 1 + \int^t_0 \p_x(\lambda_3 \cir \eta) (x,s) \: ds \notag \\
& = 1+ \tfrac{1+\alpha}{2} \int^t_0 W_x(x,s) \: ds + \frac{1-\alpha}{2} \int^t_0 Z_x(x,s) \: ds \notag \\
& = \eta_x(0,t) + \tfrac{1+\alpha}{2} \int^t_0 (W_x(x,s)-W_x(0,s)) \: ds + \tfrac{1-\alpha}{2} \int^t_0 (Z_x(x,s)-Z_x(0,s)) \: ds \notag \\
\label{eq:eta_x:min}
& =\eta_x(0,t) + (\tfrac{1+\alpha}{2} + \OO(\eps)) |x|^{\frac{1}{\beta}} t.
\end{align}
So $x = 0$ is the unique minimizer of $\eta_x(\cdot ,t)$ over $|x| \leq 1$ for all $t > 0$. It now follows from \eqref{lim:eta_x} and \eqref{ineq:eta_x} that $\eta_x(\cdot, T_*)$ has a unique zero at $x = 0$.

For the remainder of this section, let us restrict our attention to labels $x$ with $|x| \leq 1$. Let $y = \eta(x,T_*)$. We conclude from \eqref{eq:eta_x:min} and \eqref{eq:T_*} that
\begin{align*}
y 
= \eta(0,T_*) + \int^x_0 \eta_x(x',T_*) \: dx' 
& =: y_* + \int^x_0 \eta_x(x',T_*)-\eta_x(0,T_*) \: dx' \\
& = y_* + \int^x_0 (1+\OO(\eps)) |x'|^{\frac{1}{\beta}} \: dx' \\
& = y_* + ( \tfrac{\beta}{\beta+1} + \OO(\eps)) \sgn(x) |x|^{1+\frac{1}{\beta}}.
\end{align*}
It follows from this equation that $\sgn(y-y_*) = \sgn(x)$, and
\begin{equation}
\label{eq:x:1}
x = \sgn(y-y_*) \big( (1+ \tfrac{1}{\beta})^{\frac{\beta}{\beta+1}} + \OO(\eps) \big) |y-y_*|^{\frac{\beta}{\beta+1}}.
\end{equation}
Therefore, \eqref{eq:W_x:thm:1} gives us
\begin{align*}
w(y,T_*) & = W(x,T_*) \\
& = W(0,T_*) + W_x(0,T_*) x + \int^x_0 W_x(x',T_*)-W_x(0,T_*) \: dx' \\
& = w(y_*,T_*) + (-1+\OO(\eps)) x + \int^x_0 (1+\OO(\eps)) |x'|^{\frac{1}{\beta}} \: dx' \\
& = w(y_*,T_*) + (-1+\OO(\eps)) x + (1+\OO(\eps)) \tfrac{\beta}{\beta+1} \sgn(x) |x|^{1+\frac{1}{\beta}} \\
& = w(y_*, T_*) - \big( (1+\tfrac{1}{\beta})^{\frac{\beta}{\beta+1}} + \OO(\eps)\big) \sgn(y-y_*) |y-y_*|^{\frac{\beta}{\beta+1}} + (1+\OO(\eps)) (y-y_*).
\end{align*}
We have just proven that \eqref{eq:thm:expansion:1} holds for all $y \in \TT$ such that $|x| \leq 1$.

Let's now estimate $y_*$ and the radius of the neighborhood around $y_*$ for which $|x| \leq 1$. \eqref{eq:T_*} gives us
\begin{align*}
y_* 
= \eta(0,T_*)
& = \tfrac{1+\alpha}{2} \int^{T_*}_0 W(0,t) \: dt + \tfrac{1-\alpha}{2}\int^{T_*}_0 Z(0,t) \: dt \\
& = \tfrac{1+\alpha}{2} \int^{T_*}_0 w_0(0) + \OO(\eps)\: dt + \OO(\eps) \\
& = \tfrac{1+\alpha}{2} T_* (1+ \OO(\eps)) \\
& = 1 + \OO(\eps).
\end{align*}
We also compute that
\begin{align*}
\eta(1,T_*) - y_* & = \int^1_0 \eta_x(x,T_*) \: dt \\
& = \int^1_0 1 + \tfrac{1+\alpha}{2}T_* w'_0(x) + \OO(\eps) \: dx
& = 1 + (1+\OO(\eps)) \int^1_0 -1 + |x|^{\frac{1}{\beta}} \: dx + \OO(\eps) \\
& = \tfrac{\beta}{\beta+1} + \OO(\eps).
\end{align*}
Therefore, the neighborhood $\{ y \in \TT : x \in [0,1]\}$ corresponds to $\{ y> y_* : |y-y_*| \leq r \}$ for some $r = \frac{\beta}{\beta+1} + \OO(\eps)$. An analogous computation proves an analogous result for labels $x \in [-1,0]$.

Let us now get the H\"{o}lder regularity estimates for $\p_y z(\cdot, t)$ and $\p_y k(\cdot, t)$. Let $y = \eta(x,t)$. Since $\eta_x \leq 3+\alpha$ everywhere (see Lemma \ref{lem:1:1}), we know that
\begin{align}
|y_2-y_1| & = \bigg\vert \int^{x_2}_{x_1} \eta_x(x,t) \: dx \bigg\vert
\leq (3+\alpha) |x_2-x_1|
\end{align}
for all $x_1, x_2 \in \TT, t \in [0,T_*]$. We also know that $\eta_x \geq \frac{1}{2} + \OO(\eps)$ for $0 \leq t \leq \frac{1}{1+\alpha}$ and that $\eta_x > \frac{1}{2}$ for $1\leq |x| \leq \pi$. For $|x|\leq 1$ and $\frac{1}{1+\alpha} \leq t \leq T_*$ \eqref{eq:eta_x:min} gives us
\begin{equation*}
\eta_x \geq (\tfrac{1}{2} + \OO(\eps) ) |x|^{\frac{1}{\beta}}
\end{equation*}
for all $|x| \leq 1, \frac{1}{1+\alpha} \leq t \leq T_*$. It follows that
\begin{equation*}
\tfrac{1}{\eta_x} \leq \begin{cases} (2+ \OO(\eps))|x|^{-\frac{1}{\beta}} \hspace{10mm} |x| \leq 1 \text{ and } \frac{1}{1+\alpha} \leq t \leq T_* \\
2+ \OO(\eps) \hspace{25mm} \text{ otherwise} \end{cases} .
\end{equation*}
If we define $y_0 = \eta(0,t)$, then for all $ \frac{1}{1+\alpha} \leq t \leq T_*, \eta(-1,t) \leq y_1 \leq y_2 \leq \eta(1,t)$ we have
\begin{align*}
x_2-x_1 & = \int^{y_2}_{y_1} \frac{1}{\eta_x(x,t)} dy \\
& \leq (2+\OO(\eps)) \int^{y_2}_{y_1} |x|^{-\frac{1}{\beta}} \: dy \\
& \leq (2+\OO(\eps)) (3+\alpha)^{\frac{1}{\beta}} \int^{y_2}_{y_1} |y-y_0|^{-\frac{1}{\beta}} \: dy \\
& \leq (1+\tfrac{1}{\beta})(2+\OO(\eps)) (3+\alpha)^{\frac{1}{\beta}} \begin{cases} |y_2-y_0|^{\frac{\beta}{\beta+1}} + |y_0-y_1|^{\frac{\beta}{\beta+1}} \hspace{10mm} y_1 < y_0 \leq y_2 \\ \big\vert |y_2-y_0|^{\frac{\beta}{\beta+1}} - |y_1-y_0|^{\frac{\beta}{\beta+1}}\big\vert \hspace{8mm} \text{otherwise} \end{cases} \\
& \leq 2^{\frac{1}{\beta+1}}(1+\tfrac{1}{\beta})(2+\OO(\eps)) (3+\alpha)^{\frac{1}{\beta}} |y_2-y_1|^{\frac{\beta}{\beta+1}}.
\end{align*}
It is straightforward from here to verify that
\begin{equation}
\label{ineq:x:y}
|x_2-x_1| \lesssim |y_2-y_1|^{\frac{\beta}{\beta+1}} \hspace{10mm} \forall \: x_1,x_2 \in \TT, t \in [0,T_*].
\end{equation}

Since
\begin{align*}
\p_y z(y,t) & = \Z(x,t)-\tfrac{1}{2\gamma} \Sigma(x,t) \K(x,t), \\
\p_y k(y,t) & = \K(x,t),
\end{align*}
it now follows from \eqref{estimates:holder} and \eqref{ineq:x:y} that
\begin{equation*}
[\p_y z(\cdot, t) ]_{C^{0,\frac{1}{\beta+1}}} \lesssim \eps, \qquad [\p_y k(\cdot, t) ]_{C^{0,\frac{1}{\beta+1}}} \lesssim \eps.
\end{equation*}
This gives us the estimates \eqref{eq:thm:z:k} from Theorem \ref{thm:main}.
\qed

\subsection*{Acknowledgments} 
S.S.~was supported by  NSF grant DMS-2007606 and the Department of Energy Advanced Simulation and Computing (ASC) Program.  I.N.~and V.V.~were supported by the NSF CAREER grant DMS-1911413.  


\begin{bibdiv}
\begin{biblist}

\bib{BuDrShVi2021}{article}{
      author={Buckmaster, Tristan},
      author={Drivas, Theodore~D},
      author={Shkoller, Steve},
      author={Vicol, Vlad},
       title={{Simultaneous development of shocks and cusps for 2D Euler with
  azimuthal symmetry from smooth data}},
        date={2022},
     journal={Ann. PDE},
      volume={8},
      number={2},
       pages={pp.~199},
}

\bib{buckmaster2022formation}{article}{
      author={Buckmaster, Tristan},
      author={Iyer, Sameer},
       title={Formation of unstable shocks for 2d isentropic compressible
  euler},
        date={2022},
     journal={Communications in Mathematical Physics},
      volume={389},
      number={1},
       pages={197\ndash 271},
}

\bib{BuShVi2019b}{article}{
      author={Buckmaster, Tristan},
      author={Shkoller, Steve},
      author={Vicol, Vlad},
       title={{Formation of point shocks for 3D compressible Euler}},
        date={2022},
     journal={Communications on Pure and Applied Mathematics},
      eprint={https://onlinelibrary.wiley.com/doi/10.1002/cpa.22068},
         url={https://onlinelibrary.wiley.com/doi/10.1002/cpa.22068},
}

\bib{BuShVi2020}{article}{
      author={Buckmaster, Tristan},
      author={Shkoller, Steve},
      author={Vicol, Vlad},
       title={{Shock formation and vorticity creation for 3d Euler}},
        date={2022},
     journal={Communications on Pure and Applied Mathematics},
      eprint={https://onlinelibrary.wiley.com/doi/10.1002/cpa.22067},
         url={https://onlinelibrary.wiley.com/doi/10.1002/cpa.22067},
}

\bib{BuShVi2019a}{article}{
      author={Buckmaster, Tristan},
      author={Shkoller, Steve},
      author={Vicol, Vlad},
       title={Formation of shocks for 2d isentropic compressible euler},
        date={2022},
     journal={Communications on Pure and Applied Mathematics},
      volume={75},
      number={9},
       pages={2069\ndash 2120},
}

\bib{chen2002cauchy}{incollection}{
      author={Chen, Gui-Qiang},
      author={Wang, Dehua},
       title={The cauchy problem for the euler equations for compressible
  fluids},
        date={2002},
   booktitle={Handbook of mathematical fluid dynamics},
      volume={1},
   publisher={Elsevier},
       pages={421\ndash 543},
}

\bib{chen2001formation}{article}{
      author={Chen, Shuxing},
      author={Dong, Liming},
       title={Formation and construction of shock for {p}-system},
        date={2001},
     journal={Science in China Series A: Mathematics},
      volume={44},
      number={9},
       pages={1139\ndash 1147},
}

\bib{christodoulou2007formation}{book}{
      author={Christodoulou, Demetrios},
       title={The formation of shocks in 3-dimensional fluids},
   publisher={European Mathematical Society},
        date={2007},
      volume={2},
}

\bib{collot2018singularityburgers}{article}{
      author={Collot, Charles},
      author={Ghoul, Tej-Eddine},
      author={Masmoudi, Nader},
       title={Singularity formation for burgers equation with transverse
  viscosity},
        date={2022},
     journal={Ann. Sci. {\'E}c. Norm. Sup{\'e}r. (4)},
      volume={55},
      number={4},
       pages={1047\ndash 1133},
}

\bib{dafermos2005hyperbolic}{book}{
      author={Dafermos, Constantine~M},
       title={Hyperbolic conservation laws in continuum physics},
   publisher={Springer},
        date={2005},
      volume={3},
}

\bib{eggers2008role}{article}{
      author={Eggers, Jens},
      author={Fontelos, Marco~A},
       title={The role of self-similarity in singularities of partial
  differential equations},
        date={2008},
     journal={Nonlinearity},
      volume={22},
      number={1},
       pages={R1},
         url={https://doi.org/10.1088/0951-7715/22/1/001},
}

\bib{eggers2015singularities}{book}{
      author={Eggers, Jens},
      author={Fontelos, Marco~Antonio},
       title={Singularities: formation, structure, and propagation},
   publisher={Cambridge University Press},
        date={2015},
      volume={53},
}

\bib{IyRiShVi2023}{article}{
      author={Iyer, Sameer},
      author={Rickard, Calum},
      author={Shkoller, Steve},
      author={Vicol, Vlad},
       title={A detailed description of all $c^{\frac{1}{2k+1}}$ shock
  structures for 2d compressible euler in azimuthal symmetry},
        date={2023},
     journal={preprint},
}

\bib{John74}{article}{
      author={John, Fritz},
       title={Formation of singularities in one-dimensional nonlinear wave
  propagation},
        date={1974},
        ISSN={0010-3640},
     journal={Comm. Pure Appl. Math.},
      volume={27},
       pages={377\ndash 405},
         url={https://doi.org/10.1002/cpa.3160270307},
      review={\MR{369934}},
}

\bib{Kong2002}{article}{
      author={Kong, D.-X.},
       title={Formation and propagation of singularities for {$2\times2$}
  quasilinear hyperbolic systems},
        date={2002},
     journal={Transactions of the American Mathematical Society},
      volume={354},
      number={8},
       pages={3155\ndash 3179},
  url={https://www.ams.org/journals/tran/2002-354-08/S0002-9947-02-02982-3/},
}

\bib{lax1964development}{article}{
      author={Lax, Peter~D},
       title={Development of singularities of solutions of nonlinear hyperbolic
  partial differential equations},
        date={1964},
     journal={Journal of Mathematical Physics},
      volume={5},
      number={5},
       pages={611\ndash 613},
}

\bib{lebaud1994description}{article}{
      author={Lebaud, MP},
       title={Description de la formation d'un choc dans le p-syst{\`e}me},
        date={1994},
     journal={Journal de math{\'e}matiques pures et appliqu{\'e}es},
      volume={73},
      number={6},
       pages={523\ndash 566},
}

\bib{liu1979development}{article}{
      author={Liu, Tai-Ping},
       title={Development of singularities in the nonlinear waves for
  quasi-linear hyperbolic partial differential equations},
        date={1979},
     journal={Journal of Differential Equations},
      volume={33},
      number={1},
       pages={92\ndash 111},
         url={https://mathscinet.ams.org/mathscinet-getitem?mr=540819},
}

\bib{liu2021shock}{book}{
      author={Liu, Tai-Ping},
       title={Shock waves},
   publisher={American Mathematical Soc.},
        date={2021},
      volume={215},
}

\bib{luk2018shock}{article}{
      author={Luk, Jonathan},
      author={Speck, Jared},
       title={Shock formation in solutions to the 2d compressible euler
  equations in the presence of non-zero vorticity},
        date={2018},
     journal={Inventiones mathematicae},
      volume={214},
      number={1},
       pages={1\ndash 169},
         url={https://doi.org/10.1007/s00222-018-0799-8},
}

\bib{luk2021stability}{article}{
      author={Luk, Jonathan},
      author={Speck, Jared},
       title={The stability of simple plane-symmetric shock formation for 3d
  compressible euler flow with vorticity and entropy},
        date={2021},
     journal={arXiv preprint arXiv:2107.03426},
}

\bib{majda1984compressible}{book}{
      author={Majda, Andrew},
       title={Compressible fluid flow and systems of conservation laws in
  several space variables},
   publisher={Springer-Verlag},
        date={1984},
      volume={53},
         url={https://doi.org/10.1007/978-1-4612-1116-7},
}

\bib{neal2023}{article}{
      author={Neal, Isaac},
      author={Shkoller, Steve},
      author={Vicol, Vlad},
       title={A characteristics approach to shock formation in {2D} {E}uler
  with azimuthal symmetry and entropy},
        date={202302},
     journal={arXiv preprint arXiv:2302.01289},
      eprint={2302.01289},
         url={https://arxiv.org/pdf/2302.01289.pdf},
}

\bib{sideris1985formation}{article}{
      author={Sideris, Thomas~C},
       title={Formation of singularities in three-dimensional compressible
  fluids},
        date={1985},
     journal={Communications in mathematical physics},
      volume={101},
      number={4},
       pages={475\ndash 485},
         url={http://projecteuclid.org/euclid.cmp/1104114244},
}

\bib{yin2004formation}{article}{
      author={Yin, Huicheng},
       title={Formation and construction of a shock wave for 3-d compressible
  euler equations with the spherical initial data},
        date={2004},
     journal={Nagoya Mathematical Journal},
      volume={175},
       pages={125\ndash 164},
         url={https://doi.org/10.1017/S002776300000893X},
}

\end{biblist}
\end{bibdiv}

\end{document}